\documentclass[letterpaper, 10 pt, conference]{ieeeconf} 

\usepackage{cite}
\usepackage{amsmath,amssymb,amsfonts}
\usepackage{algorithmic}
\usepackage{graphicx}
\usepackage{textcomp}

\usepackage{subfigure}
\usepackage{epsfig} 
\usepackage{hyperref}
\hypersetup{colorlinks=false, hidelinks}
\usepackage{xcolor}
\newenvironment{IEEEkeywords}{\vspace{0.5em}\noindent{\bfseries\itshape Index Terms---}}{\par}
\let\IEEEPARstart\PARstart

\newtheorem{theorem}{Theorem}
\newtheorem{corollary}[theorem]{Corollary}

\newtheorem{lemma}[theorem]{Lemma}
\newtheorem{proposition}[theorem]{Proposition}
\newtheorem{remark}{Remark}
\newtheorem{problem}{Problem}
\newtheorem{assumption}{Assumption}

\newcommand{\real}{\mathbb{R}}

\newcommand{\Cc}{{\mathcal{C}}}

\newcommand{\Rc}{{\mathcal{R}}}

\newcommand{\Kc}{{\mathcal{K}}}

\newcommand{\bx}{{\mathbf{x}}}

\newcommand{\by}{{\mathbf{y}}}

\newcommand{\bz}{{\mathbf{z}}}

\newcommand{\bg}{{\mathbf{g}}}

\newcommand{\bu}{{\mathbf{u}}}
\newcommand{\bp}{{\mathbf{p}}}

\newcommand{\bv}{{\mathbf{v}}}
\newcommand{\bc}{{\mathbf{c}}}
\newcommand{\bb}{{\mathbf{b}}}

\newcommand{\bk}{{\mathbf{k}}}
\newcommand{\bbf}{{\mathbf{f}}}

\newcommand{\bA}{{\mathbf{A}}}
\newcommand{\bB}{{\mathbf{B}}}

\newcommand{\bG}{{\mathbf{G}}}
\newcommand{\bK}{{\mathbf{K}}}

\newcommand{\bI}{{\mathbf{I}}}

\newcommand{\bP}{{\mathbf{P}}}

\newcommand{\bC}{{\mathbf{C}}}

\newcommand{\bQ}{{\mathbf{Q}}}
\newcommand{\bR}{{\mathbf{R}}}
\newcommand{\bY}{{\mathbf{Y}}}

\newcommand{\bDelta}{{\boldsymbol{\Delta}}}

\newcommand\xqed[1]{%
  \leavevmode\unskip\penalty9999 \hbox{}\nobreak\hfill
  \quad\hbox{#1}}
\newcommand\demo{\xqed{$\bullet$}}

\newcommand{\longthmtitle}[1]{\mbox{}\emph{(#1):}}
\newcommand{\setdef}[2]{\{#1 : #2\}}

\newcommand{\norm}[1]{\left\lVert#1\right\rVert}

\allowdisplaybreaks
\renewcommand{\baselinestretch}{.98}

\begin{document}
\title{\LARGE \bf Dynamical Properties of Safety Filters for Linear Systems \\
and Affine Control Barrier Functions}
\author{Pol Mestres, Shima Sadat Mousavi, and Aaron D. Ames
\thanks{The authors are with the Department of Mechanical and Civil
Engineering, California Institute of Technology, Pasadena, CA 91125, USA.
Emails: \texttt{mestres,smousavi,ames@caltech.edu}.
This research is supported by The Boeing Company.}
}
\maketitle
\thispagestyle{empty}

\begin{abstract}
This letter studies the dynamical properties of \textit{safety filters} designed based on Control Barrier Functions (CBF). This mechanism, which is popular in safety-critical 
applications, takes a nominal controller and minimally modifies it to render it safe.
Although CBF-based safety filters make the closed-loop system safe, characterizing their additional dynamical properties, such as stability, boundedness, or existence of spurious equilibria, remains a challenging problem.
Here, we address this problem for the case of linear systems and an affine CBF constraint. We provide conditions under which the closed-loop system presents undesired equilibria, unbounded trajectories, or the origin is globally exponentially stable.
\end{abstract}

\begin{IEEEkeywords}
Safety-critical control, stability of nonlinear systems, Lyapunov methods.
\end{IEEEkeywords}

\section{Introduction}

\IEEEPARstart{M}{odern} engineering systems such as aerospace vehicles and humanoid robots
are subject to strict safety requirements during operation.
Control theory formalizes such safety specifications by requiring that the state of the system remains within a prescribed region.
Different techniques have been developed to achieve this, including control barrier functions (CBFs)~\cite{ADA-XX-JWG-PT:17}, model predictive control~\cite{JBR-DQM-MMD:17}, Hamilton--Jacobi reachability~\cite{SB-MC-SH-CJT:17}, and reference governors~\cite{EG-SDC-IK:17}.

Here we focus on a particular approach to design safe controllers referred to as \textit{CBF-based safety filters}. This mechanism minimally modifies a nominal 
(potentially unsafe) 
controller at every 
state in order to make it safe.
The resulting safe controller can be obtained at every point as the solution of an optimization problem, which is a quadratic program (QP) if the dynamics are control-affine~\cite{ADA-SC-ME-GN-KS-PT:19}, in which case it can be expressed in closed-form~\cite{XT-DVD:24,PM-SSM-PO-LY-ED-JWB-ADA:25, mousavi2025vertices,mousavi2026structure}.

Despite the widespread application of CBF-based safety filters for a wide range of tasks such as adaptive cruise control~\cite{ADA-JWG-PT:14}, bipedal robotic walking~\cite{SCH-XX-ADA:15}, or aircraft flight~\cite{OS-ZS-MM-JG-KR-NR-CF:24}, the recent works~\cite{PM-YC-EDA-JC:25-arxiv,NM-JC-PS-KZ:25,XT-DVD:24,MFR-APA-PT:21} 
have shown that the closed-loop system obtained from CBF-based safety filters can exhibit a variety of undesired behaviors, such as unbounded trajectories, limit cycles, or undesired equilibria (i.e., equilibria that do not exist in the nominal system).
\textcolor{black}{In particular,~\cite{XT-DVD:24} studies the problem by adding a control Lyapunov function (CLF)
constraint on the safety filter, which introduces a different set of undesired equilibria, and only local asymptotic stability of desired equilibria is shown. In this paper, we study the safety filter without the CLF constraint.}
In general, 
a full characterization of the dynamical properties produced by CBF-based safety filters, as well as design principles that ensure that they lead to desirable dynamical behaviors remains an important open research problem.

In this paper, we study the problem in the case where the dynamics are linear and the safety constraints are affine in the
system state.
Although there exist computationally tractable techniques for reachability analysis of linear systems~\cite{MA:20,ABK-PV:00}, solving constrained optimal control problems is often still difficult~\cite{SB-MC-SH-CJT:17}, and hence justifies the use of CBF-based safety filters.
Additionally, this setting is particularly relevant in applications such as aerospace control~\cite{EL-KAW:24}, where control designs are implemented to linearized models of the aircraft's full dynamics.
Moreover, the development of these results in the linear case paves the way to establishing analogous results for general nonlinear systems.

\begin{figure}[t]
    \centering
    \includegraphics[width=0.9\columnwidth]{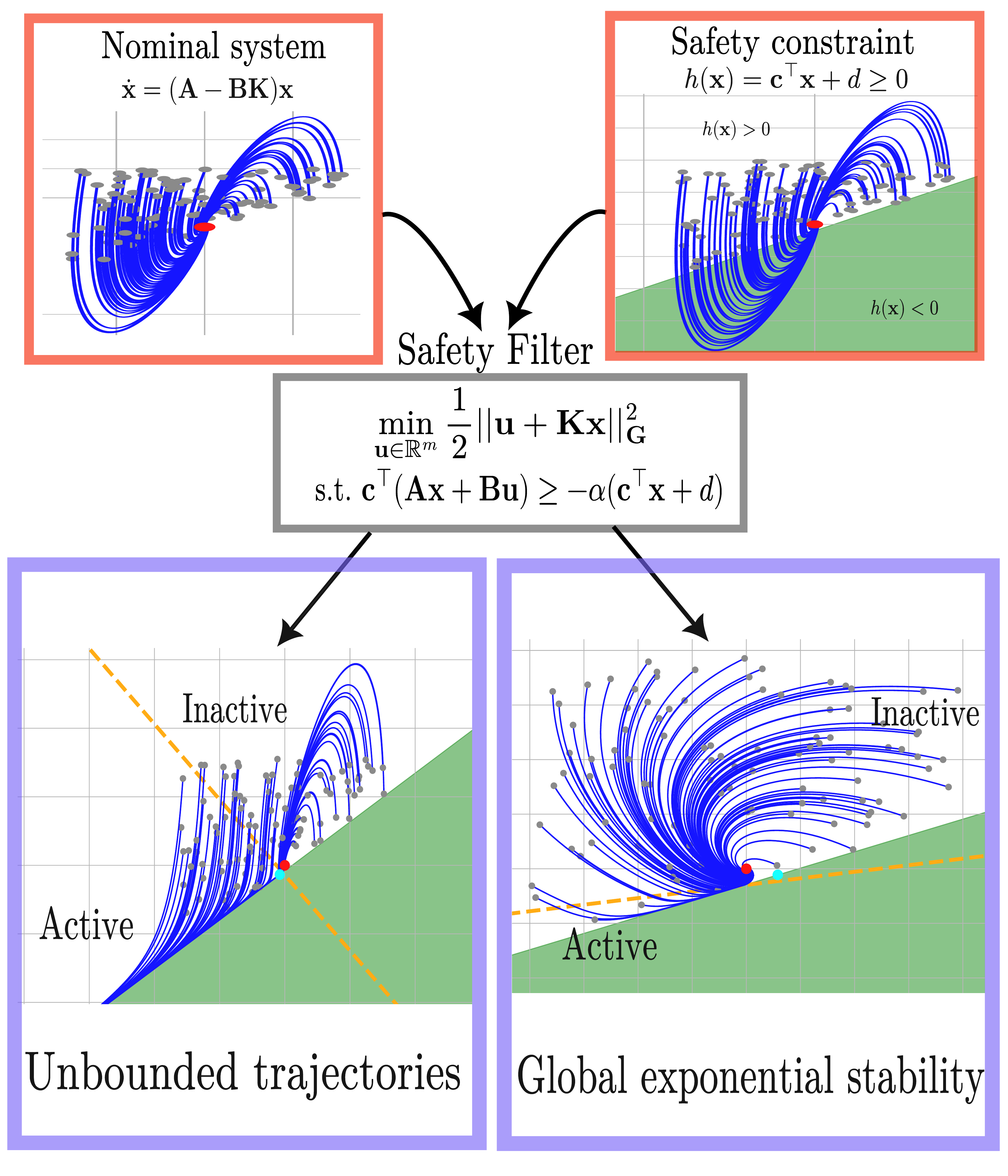}
    \caption{
    \footnotesize
    Overview of the paper. 
    A CBF-based safety filter modifies stable nominal linear dynamics under an affine constraint.
    Depending on the active-mode spectrum and its compatibility with the nominal
mode, the resulting piecewise-affine closed loop may be globally exponentially
stable, exhibit undesired equilibria, or have unbounded trajectories.
    }
    \label{fig:hero}
\end{figure}


\vspace{-0.01mm}

The contributions of the paper are as follows.
First, by exploiting the closed-form solution of the safety filter, 
we show how the different parameters
affect the eigenstructure of the matrix defining the dynamics when the filter is active (i.e., the active-mode matrix), which plays an important role in the results of the paper.
Second, we provide a characterization of the set of undesired equilibria, and show that their existence is related to the parity of the number of positive real eigenvalues of this matrix.
Third, we provide spectral conditions, 
involving the active-mode matrix and its compatibility with the nominal mode, that ensure that the origin is globally exponentially stable (GES). We also show that if this matrix has positive real eigenvalues, then the closed-loop system has unbounded trajectories. 
Fourth, we interpret these conditions through the invariant zeros of an appropriately defined linear single-input single-output (SISO) system. 
Finally, we show that designing a nominal linear controller that renders the filtered system GES can be cast as a linear matrix inequality (LMI).
We illustrate our results in different simulations.

\section{Background}\label{sec:background}

Here we revisit basic notation, CBFs, and introduce the problem we seek to solve in this paper.

\subsubsection*{Notation}
We denote by $\mathbb{N}$, $\real$, $\real_{\geq0}$ the set of natural, real and non-negative real numbers, respectively. We use bold (resp. non-bold) symbols to represent vectors (resp. scalars).
Given $n\in\mathbb{N}$, we let $\mathbf{0}_n$ be the $n$-dimensional zero vector and $[n] = \{ 1, \hdots, n \}$.
For a set $\mathcal{S}\subset\real^n$, $\text{Int}(\mathcal{S})$ and $\partial\mathcal{S}$ denote the interior and boundary of the set $\mathcal{S}$.
For a matrix $\bG\in\real^{n\times n}$, $\text{det}(\bG)$, $\text{adj}(\bG)$, and $\text{spec}(\bG)$ denote its determinant, adjoint matrix, and spectrum, respectively.
For $\bx\in\real^n$, 
and a positive-definite matrix $\bG \in \real^{n\times n}$, $\norm{\bx}_{\bG} = \sqrt{\bx^\top \bG \bx}$.
A function {\color{black}$\alpha:\real_{\geq0}\to\real_{\geq0}$} is of class $\Kc$ if it is continuous, strictly increasing, and satisfies $\alpha(0) = 0$.
Given a linear system $\dot{\bx} = \bA\bx+\bB\bu$, with $\bx\in\real^n$, $\bu\in\real^m$, and output $\by = \bC \bx$, with $\by\in\real^p$, we say that $\bz\in\mathbb{C}$ is an invariant zero of this linear system $(\bA,\bB,\bC)$ if $\text{rank}\begin{pmatrix}
    \bA-\bz\bI & \bB \\
    \bC & \mathbf{0}
\end{pmatrix} < n + m$.
For $z\in\real$,
$\text{sign}(z) = 1$ if $z > 0$, $\text{sign}(z) = -1$ if $z < 0$, and $\text{sign}(z) = 0$ if $z = 0$.
We say that the matrices $\bA_1, \bA_2$ have a common quadratic Lyapunov function (CQLF) if
$\dot{\bx} = \bA_1\bx$, $\dot{\bx} = \bA_2\bx$ have one such function.

Consider a control-affine system: 
\begin{align}\label{eq:control-affine-system}
    \dot{\bx} = \bbf(\bx) + \bg(\bx) \bu,
\end{align}
where $\bx\in\real^n$ is the state, $\bu\in\real^m$ the control input, and $\bbf:\real^n\to\real^n$, $\bg:\real^n\to\real^{n\times m}$ are locally Lipschitz.
Let $h:\real^n\to\real$ be continuously differentiable, and $\Cc = \setdef{\bx\in\real^n}{h(\bx) \geq 0}$ be a safe set.
We are interested in designing a controller that renders $\Cc$ forward invariant.
If $h$ has relative degree one, this can be achieved through control barrier functions (CBFs)~\cite{ADA-SC-ME-GN-KS-PT:19}. 
In the case where $h$ has arbitrary relative degree, CBFs are generalized through high-order control barrier functions (HOCBFs)~\cite{WX-CB:22}.
The following result recalls its construction and the safety guarantee they provide.

\begin{theorem}\longthmtitle{HOCBF~\cite[{\color{black}Theorem} 4]{WX-CB:22}}\label{thm:hocbf}
    Let $h:\real^n\to\real$ be a continuously differentiable function defining a set $\Cc = \Cc_0 = \setdef{\bx\in\real^n}{h(\bx) \geq 0}$.
    Suppose that $h$ has relative degree $r\in\mathbb{N}$ in $\real^n$
    and define $h_0(\bx) = h(\bx)$ and $h_i(\bx) = \dot{h}_{i-1}(\bx) + {\color{black}\bar{\alpha}_i}(h_{i-1}(\bx))$ for $i\in[r-1]$ with {\color{black}$\bar{\alpha}_i$} an extended class $\Kc$ function.
    Further let $\Cc_i = \setdef{\bx\in\real^n}{h_i(\bx) \geq 0}$ for $i\in[r-1]$.
    Then,
    \begin{align}\label{eq:hocbf-inequality}
        L_{\bbf}h_{r-1}(\bx) + L_{\bg}h_{r-1}(\bx)\bu + {\color{black}\bar{\alpha}_r}(h_{r-1}(\bx)) \geq 0,
    \end{align}
    (with $\alpha_r$ a class $\Kc$ function) is feasible for all $\bx\in\real^n$ and any locally Lipschitz controller satisfying~\eqref{eq:hocbf-inequality} in $\bar{\Cc} := \cap_{i=0}^{r-1} \Cc_i$ renders $\bar{\Cc}$ forward invariant.
\end{theorem}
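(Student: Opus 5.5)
Two claims need proof: feasibility of \eqref{eq:hocbf-inequality} at every $\bx$, and forward invariance of $\bar{\Cc}$ under any locally Lipschitz controller that satisfies it on $\bar{\Cc}$.

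\emph{Feasibility.} We use the relative-degree hypothesis directly. Because $h$ has relative degree $r$ on $\real^n$, the derivatives $h, \dot h, \dots, h^{(r-1)}$ along \eqref{eq:control-affine-system} do not depend on $\bu$. By induction on $i$, each $h_i$ with $i\le r-1$ is a function of $\bx$ only, built from $h,\dots,h^{(i)}$ and compositions with the $\bar\alpha_j$. Differentiating once more, $L_{\bg}h_{r-1}(\bx)=L_{\bg}L_{\bbf}^{r-1}h(\bx)$. The extra terms from the $\bar\alpha_j$ contribute no control dependence, since they involve derivatives of $h_{j-1}$ of order at most $r-1$. The relative-degree assumption gives $L_{\bg}L_{\bbf}^{r-1}h(\bx)\neq 0$ for every $\bx$. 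Hence \eqref{eq:hocbf-inequality} is a nontrivial affine inequality in $\bu$, and choosing $\bu$ as a suitable multiple of $(L_{\bg}h_{r-1}(\bx))^\top$ satisfies it at every $\bx$.

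\emph{Forward invariance.} Fix a locally Lipschitz controller satisfying \eqref{eq:hocbf-inequality} on $\bar\Cc$, and let $\bx(t)$ be the closed-loop solution with $\bx(0)\in\bar\Cc$. The argument runs backward from the top level.
\begin{enumerate}
\item \textbf{Level $r-1$.} Along the solution, $\dot h_{r-1}\ge -\bar\alpha_r(h_{r-1})$ while the state stays in $\bar\Cc$. By the comparison lemma, comparing with $\dot y=-\bar\alpha_r(y)$, $y(0)=h_{r-1}(\bx(0))\ge0$, we get $h_{r-1}(\bx(t))\ge y(t)\ge0$. Here $0$ is an equilibrium of the comparison system and solutions starting at nonnegative values stay nonnegative.
\item \textbf{Descending step.} Suppose $h_i(\bx(t))\ge0$ on an interval. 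By definition $\dot h_{i-1}=h_i-\bar\alpha_i(h_{i-1})\ge -\bar\alpha_i(h_{i-1})$. The same comparison argument gives $h_{i-1}(\bx(t))\ge0$, starting from $h_{i-1}(\bx(0))\ge0$.
\end{enumerate}
Descending to $i=0$ shows that every $\Cc_i$ is preserved.

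\emph{Main obstacle.} The difficulty is circularity: the constraint is only assumed to hold on $\bar\Cc$, so we cannot use it before knowing the trajectory stays in $\bar\Cc$. I would handle this with a first-exit-time argument. Let $T=\sup\{t : \bx(s)\in\bar\Cc \ \forall s\in[0,t]\}$ and suppose $T$ is smaller than the maximal existence time. On $[0,T]$ the chain above gives $h_i\ge0$ for all $i$. Leaving $\bar\Cc$ right after $T$ would require some $h_i$ to become negative. That is ruled out by a Nagumo-type boundary argument: at a boundary point where $h_i=0$, the inequality $\dot h_i\ge-\bar\alpha_{i+1}(0)=0$ holds, assuming $h_{i+1}\ge0$ there, which is true at time $T$ by continuity.

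One technicality remains. The comparison lemma needs uniqueness of solutions of the scalar comparison ODE $\dot y=-\bar\alpha(y)$, which may fail if $\bar\alpha$ is merely continuous. I would avoid it with the standard device of replacing $\bar\alpha$ by a locally Lipschitz extended class-$\Kc$ lower bound, or by invoking Nagumo's theorem directly on each closed set $\Cc_i$ intersected with the others. Either way the conclusion is forward invariance of $\bar\Cc$, as claimed in Theorem~\ref{thm:hocbf}.
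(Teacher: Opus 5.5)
The paper gives no proof of its own: it cites \cite[Theorem 4]{WX-CB:22} and remarks that feasibility follows from $L_{\bg}L_{\bbf}^{r-1}h(\bx)\neq\mathbf{0}_m$, which is your feasibility argument. Your top-down comparison cascade is the argument behind the cited theorem, where the control must satisfy \eqref{eq:hocbf-inequality} along the trajectory for all $t\geq t_0$, so no circularity arises there.

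The gap is in how you handle the weaker hypothesis stated here, namely that the inequality holds only on $\bar{\Cc}$. Because $\dot h_i=h_{i+1}-\bar\alpha_{i+1}(h_i)$ holds along every trajectory regardless of the input for $i<r-1$, your cascade already shows what an exit at $T$ forces: $h_{r-1}(\bx(T))=0$, $\dot h_{r-1}(T)=0$, and $h_{r-1}(\bx(t))<0$ at times arbitrarily close after $T$. Your pointwise check ``$\dot h_i\geq0$ at the boundary point at time $T$'' cannot exclude this. A vanishing derivative at one instant says nothing about $(T,T+\delta)$, and at those times $\bx(t)\notin\bar{\Cc}$, so you have no inequality on $\dot h_{r-1}$ at all.

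Nagumo's theorem is the right tool, but it must be applied to $\bar{\Cc}$ itself. You must check that the closed-loop field lies in the tangent cone of the intersection at \emph{every} boundary point $\bz$, not just at $\bx(T)$, and use uniqueness from the local Lipschitzness of the closed loop. That cone equals $\{\mathbf{w}:\nabla h_i(\bz)^\top\mathbf{w}\geq0\text{ for all active }i\}$ only under a constraint qualification. This is automatic in the paper's setting, where by Lemma~\ref{lem:hocbf-linear-dynamics-constraints} the $h_i$ are affine and $\bar{\Cc}$ is a polyhedron, but your proposal does not establish it.

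A simpler repair keeps your cascade as is: extend the controller off $\bar{\Cc}$. Let $\psi(\bx)$ be the left-hand side of \eqref{eq:hocbf-inequality} evaluated at $\bu(\bx)$, and set $\tilde{\bu}(\bx)=\bu(\bx)+\max\{0,-\psi(\bx)\}\,L_{\bg}h_{r-1}(\bx)^\top/\norm{L_{\bg}h_{r-1}(\bx)}^2$. This is well defined since $L_{\bg}h_{r-1}\neq\mathbf{0}_m$ everywhere; it is continuous in general and locally Lipschitz in the linear setting. Then $\tilde{\bu}$ satisfies \eqref{eq:hocbf-inequality} on all of $\real^n$ and equals $\bu$ on $\bar{\Cc}$. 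Your cascade therefore shows that a trajectory under $\tilde{\bu}$ starting in $\bar{\Cc}$ never leaves it. On $\bar{\Cc}$ the two closed-loop fields agree, so this curve also solves the original closed loop, and uniqueness for the latter identifies the two.

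Finally, your closing ``technicality'' is not an issue. Suppose $y(0)\geq0$, $\dot y\geq-\bar\alpha(y)$ and $y(t_1)<0$, and let $t_0=\sup\{t\leq t_1:y(t)\geq0\}$. Then $y(t_0)=0$ and $\dot y>0$ on $(t_0,t_1]$, a contradiction. No uniqueness for the comparison ODE is needed, and a Lipschitz lower bound on $\bar\alpha$ would not give the required inequality anyway.
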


{\color{black} We note that the fact that~\eqref{eq:hocbf-inequality} is feasible for all $\bx\in\real^n$ is guaranteed by the fact that $h$ has relative degree $r$ in $\real^n$ (and hence, $L_{\bg} L_{\bbf}^{r-1} h(\bx) \neq \mathbf{0}_m$ for all $\bx\in\real^n$).}
As shown in {\color{black}Theorem}~\ref{thm:hocbf}, the key property of HOCBFs is that any locally Lipschitz controller satisfying~\eqref{eq:hocbf-inequality} renders the set $\bar{\Cc} \subset \Cc$ forward invariant.
A common mechanism to design such controller is through \textit{safety filters}. Given a nominal controller $\bk:\real^n\to\real^m$ (often with desirable properties such as asymptotic stability or optimality) and $\bG \succ 0$, safety filters construct a safe controller $\bu^*$ as follows:
\begin{align}\label{eq:ustar}
    \bu^*(\bx) = \text{arg}\min\limits_{\bu\in\real^m} \frac{1}{2}\norm{\bu - \bk(\bx)}_{\bG}^2, \
    &\text{s.t.}~\eqref{eq:hocbf-inequality}.
\end{align}
By construction, $\bu^*$ satisfies the HOCBF condition~\eqref{eq:hocbf-inequality}. Furthermore, as shown in~\cite{XX-PT-JWG-ADA:15,PM-AA-JC:25-ejc} under mild assumptions $\bu^*$ is locally Lipschitz, and hence renders $\bar{\Cc}$ forward invariant.
As noted in~\cite{PM-YC-EDA-JC:25-arxiv,MFR-APA-PT:21,XT-DVD:24}, characterizing dynamical properties such as boundedness of trajectories, regions of attraction, or existence of undesired equilibria for the system obtained by using $\bu^*$ in~\eqref{eq:control-affine-system} is challenging.
In this paper, we consider the case where~\eqref{eq:control-affine-system} is linear and $h$ is affine:

\begin{assumption}\label{as:linear-dynamics-affine-constraints}
    The dynamics~\eqref{eq:control-affine-system} are linear{\color{black}:} $\bbf(\bx) = \bA \bx$, $\bg(\bx) = \bB$, with $\bA \in \real^{n\times n}$, $\bB\in\real^{n\times m}$, and $h$ is affine, i.e., $h(\bx) = \bc^\top \bx + d$, with $\bc\in\real^n$, $d\in\real$.
\end{assumption}

We let {\color{black}
$r\in\mathbb{N}$} be the relative degree of $h$
under Assumption~\ref{as:linear-dynamics-affine-constraints}
and make the following two additional assumptions.

\begin{assumption}\label{as:origin-interior}
    $\mathbf{0}_n\in\text{Int}(\Cc)$. Equivalently, $d > 0$.
\end{assumption}


\begin{assumption}\label{as:stabilizability}
    The pair $(\bA,\bB)$ is stabilizable.
\end{assumption}

In the rest of the paper, we assume Assumptions~\ref{as:linear-dynamics-affine-constraints}-\ref{as:stabilizability}.
Since $\bk$ is often taken to be a controller that stabilizes the origin, Assumption~\ref{as:origin-interior} requires that the point where we wish to stabilize the system is safe, which is common in practice.
Although the results in the paper can be generalized to the case where the origin is not in the safe set we make Assumption~\ref{as:origin-interior} to simplify our analysis.

In what follows, we take the class $\Kc$ functions {\color{black}$\{ \bar{\alpha}_i \}_{i=1}^r$} to be linear with slopes $\{ \alpha_i > 0 \}_{i=1}^r$.
Further, we let $\alpha := \prod_{i=1}^r \alpha_i$. 
The following result provides an explicit expression for the functions $\{ h_i \}_{i=1}^{r-1}$ and inequality~\eqref{eq:hocbf-inequality} 
defined in {\color{black}Theorem}~\ref{thm:hocbf}
under Assumption~\ref{as:linear-dynamics-affine-constraints}.
\begin{lemma}\longthmtitle{HOCBF for linear dynamics and constraints}\label{lem:hocbf-linear-dynamics-constraints}
    Let $\phi_i(s) = \prod_{j=1}^i (s + \alpha_j)$ for $i\in[r]$ and $\phi = \phi_r$.
    Then, $h_i(\bx) = \bc^\top \phi_i(\bA) \bx + d \prod_{j=1}^i \alpha_j$ for $i\in[r-1]$.
    Furthermore,~\eqref{eq:hocbf-inequality} reads
    \begin{align}\label{eq:hocbf-inequality-linear}
        \bc^\top \phi(\bA)\bx + \bc^\top \bA^{r-1} \bB \bu + \alpha d \geq 0.
    \end{align}
\end{lemma}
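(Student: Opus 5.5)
We argue by induction on $i$, using the recursion $h_i = \dot h_{i-1} + \alpha_i h_{i-1}$ from Theorem~\ref{thm:hocbf} with linear class $\Kc$ functions. The one structural fact we need is the consequence of $h$ having relative degree $r$ under Assumption~\ref{as:linear-dynamics-affine-constraints}: $\bc^\top \bA^{k}\bB = \mathbf{0}_m^\top$ for $k=0,\dots,r-2$, while $\bc^\top\bA^{r-1}\bB \neq \mathbf{0}_m^\top$.

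\emph{Base case.} The case $i=0$ holds with the convention $\phi_0 \equiv 1$ and empty product equal to $1$, since $h_0(\bx) = \bc^\top\bx + d$.

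\emph{Inductive step.} Suppose $h_{i-1}(\bx) = \bc^\top\phi_{i-1}(\bA)\bx + d\prod_{j=1}^{i-1}\alpha_j$ for some $i\le r-1$. Differentiating along $\dot\bx = \bA\bx+\bB\bu$ gives
\begin{align*}
\dot h_{i-1} = \bc^\top\phi_{i-1}(\bA)\bA\bx + \bc^\top\phi_{i-1}(\bA)\bB\bu .
\end{align*}
Since $\phi_{i-1}$ is a polynomial of degree $i-1\le r-2$, the vector $\bc^\top\phi_{i-1}(\bA)\bB$ is a linear combination of $\bc^\top\bA^k\bB$ with $k\le r-2$. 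It therefore vanishes, so $\dot h_{i-1}$ is independent of $\bu$.

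Adding $\alpha_i h_{i-1}$ and using that polynomials in $\bA$ commute, we get
\begin{align*}
h_i(\bx) = \bc^\top\phi_{i-1}(\bA)(\bA+\alpha_i\bI)\bx + d\prod_{j=1}^{i}\alpha_j .
\end{align*}
Since $\phi_{i-1}(s)(s+\alpha_i) = \phi_i(s)$, this equals $\bc^\top\phi_i(\bA)\bx + d\prod_{j=1}^{i}\alpha_j$, which proves the formula for $i\in[r-1]$.

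\emph{The inequality.} Apply the same computation once more at $i=r$ to $h_{r-1}$. The drift part is
\begin{align*}
L_{\bbf}h_{r-1} + \alpha_r h_{r-1} = \bc^\top\phi_{r-1}(\bA)(\bA+\alpha_r\bI)\bx + \alpha_r d\prod_{j=1}^{r-1}\alpha_j = \bc^\top\phi(\bA)\bx + \alpha d .
\end{align*}
The input part is $L_{\bg}h_{r-1}\bu = \bc^\top\phi_{r-1}(\bA)\bB\bu$. The polynomial $\phi_{r-1}$ is monic of degree $r-1$, so $\phi_{r-1}(\bA) = \bA^{r-1} + \sum_{k<r-1}a_k\bA^k$. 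Every lower-order term is killed by the relative degree conditions, leaving $\bc^\top\bA^{r-1}\bB\bu$. This yields~\eqref{eq:hocbf-inequality-linear}.

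The only step requiring care is this last one: it uses that $\phi_{r-1}$ is monic together with the vanishing of $\bc^\top\bA^k\bB$ for $k\le r-2$. Everything else is routine bookkeeping.
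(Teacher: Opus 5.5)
Your proof is correct and follows essentially the same induction as the paper: both kill $\bc^\top\phi_{i}(\bA)\bB$ via the relative-degree conditions and then use $\phi_{i}(s)(s+\alpha_{i+1}) = \phi_{i+1}(s)$. Your version is somewhat more careful than the paper's in three places: you start the induction at $i=0$, you explicitly include $\bc^\top\bB = \mathbf{0}_m^\top$ (the case $k=0$, which the paper's ``$i\in[r-2]$'' leaves out but implicitly needs), and you spell out the monicity argument for the final inequality, which the paper leaves implicit.
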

\begin{proof}
    It is simple to check that the expression for $h_1(\bx)$ holds. Now, suppose $h_i(\bx) = \bc^\top \phi_i(\bA)\bx + d \prod_{j=1}^i \alpha_j$ for $i\in[r-2]$. Then, 
    \begin{align*}
        &h_{i+1}(\bx) \! = \! \bc^\top \phi_i(\bA) (\bA\bx \! + \! \bB\bu) \! + \! \alpha_{i+1}(\bc^\top \phi_i(\bA)\bx \! + \! d \prod_{j=1}^i \alpha_j) \\
        &\! = \! \bc^\top \! \phi_i(\bA) (\bA \! + \! \alpha_{i+1}\bI) \bx \! + \! d\prod_{j=1}^{i+1} \alpha_j \! = \! \bc^\top \! \phi_{i+1}(\bA)\bx \! + \! d\prod_{j=1}^{i+1} \! \alpha_j,
    \end{align*}
    where we have used the fact that since $\bc^\top \bx + d$ has relative degree $r$, $\bc^\top \bA^i \bB = 0$ for $i\in[r-2]$, and $\phi_i(\bA)$ has powers of $\bA$ only up to $\bA^i$.
    By induction, this proves that $h_i(\bx) = \bc^\top \phi_i(\bA) \bx + d \prod_{j=1}^i \alpha_j$ for $i\in[r-1]$.
    Now, the fact that~\eqref{eq:hocbf-inequality} reduces to~\eqref{eq:hocbf-inequality-linear} follows from the expression of $h_{r-1}$. 
\end{proof}

We further consider~\eqref{eq:ustar} equipped with a linear nominal controller $\bk(\bx) = -\bK \bx$ that makes $\bA_0 := \bA-\bB\bK$ Hurwitz.
Since $\bc^\top \bx + d$ has relative degree $r$, $\bc^\top \bA^{r-1}\bB \neq \mathbf{0}_m$ and~\eqref{eq:hocbf-inequality-linear} is feasible for all $\bx\in\real^n$. By an argument analogous to that of~\cite[{\color{black}Theorem} 2]{XX-PT-JWG-ADA:15}, $\bu^*$ is locally Lipschitz. This implies that closed-loop solutions are unique and $\bar{\Cc}$ is forward invariant.
%
Formally, this is the problem we seek to solve in this paper:

\begin{problem}\label{prob:problem}
    Given system~\eqref{eq:control-affine-system} and controller $\bu^*$ under assumptions~\ref{as:linear-dynamics-affine-constraints}-\ref{as:stabilizability},
    characterize the dynamical properties of the corresponding closed-loop system, including 
    the set of equilibria,
    the stability properties of the origin and boundedness of trajectories.
\end{problem}

In order to solve Problem~\ref{prob:problem}, it is useful to write the closed-loop system explicitly.
To do so, let us define the auxiliary constants 
$\theta = \norm{\bB^\top (\bA^\top)^{r-1}\bc}_{\bG^{-1}}^2$,
\begin{align*}
    \bv_1 = -\frac{\bB \bG^{-1} \bB^\top (\bA^\top)^{r-1} \bc }{ \theta }, \ \bv_2^\top = \bc^\top (\phi(\bA)-\bA^{r-1}\bB\bK),
\end{align*}

Now, by writing the controller $\bu^*$ in closed-form (cf.~\cite{PM-YC-EDA-JC:25-arxiv,PM-SSM-PO-LY-ED-JWB-ADA:25}), the closed-loop system can be obtained as

\begin{align}\label{eq:closed-loop-piecewise-affine}
    \dot{\bx} = \begin{cases}
        \bA_0 \bx \quad &\text{if} \ \eta(\bx) \geq 0, \\
        \tilde{\bA}\bx + \tilde{\bb}, \quad &\text{else},
    \end{cases}
\end{align}
where $\eta(\bx) = \bv_2^\top \bx + \alpha d$,
$\tilde{\bA} = \bA_0 + \bv_1 \bv_2^\top$, and 
$\tilde{\bb} = \alpha d \bv_1$.
In the sequel, we let 
$\Rc_{+} = \setdef{\bx\in\real^n}{\eta(\bx) \geq 0}$
and
$\Rc_{-} = \real^n\backslash\Rc_{+}$.
%
Note that~\eqref{eq:closed-loop-piecewise-affine} is a piecewise-affine system.
%

\section{Undesired Equilibria}\label{sec:undesired-equilibria}

In this section we study the set of equilibria of~\eqref{eq:closed-loop-piecewise-affine}.
First note that since $\eta(\mathbf{0}_n) = \alpha d$, by Assumption~\ref{as:origin-interior}, $\eta(\mathbf{0}_n) > 0$ and therefore the origin is an equilibrium of~\eqref{eq:closed-loop-piecewise-affine}.
However, as noted in~\cite{XT-DVD:24,MFR-APA-PT:21,PM-YC-EDA-JC:25-arxiv}, CBF-based safety filters can introduce undesired equilibria in the corresponding closed-loop system.
In order to characterize such equilibria, we first provide different facts about the eigenstructure of $\tilde{\bA}$.

\begin{lemma}\longthmtitle{Eigenstructure of $\tilde{\bA}$}\label{lem:conditions-tildeA-invertible}
    The value $-\alpha_r$ is an eigenvalue of $\tilde{\bA}$ with left eigenvector $\bc^\top \phi_{r-1}(\bA)$.
    Moreover, the eigenvalues of $\tilde{\bA}$ that are not eigenvalues of $\bA_0$ are either in $\{ -\alpha_i \}_{i=1}^{r-1}$ or satisfy
    \begin{align}\label{eq:eigenvalues-of-tildeA}
        \bc^\top (\lambda \bI_n - \bA_0)^{-1} \bB  \bG^{-1} \bB^\top (\bA^\top)^{r-1} \bc = 0.
    \end{align}
\end{lemma}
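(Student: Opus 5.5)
The plan is to verify the left eigenvector claim directly, and then to handle the remaining eigenvalues with the matrix determinant lemma applied to the rank-one perturbation $\tilde{\bA}=\bA_0+\bv_1\bv_2^\top$. Throughout I set $\bp^\top := \bc^\top\phi_{r-1}(\bA)$.

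The basic ingredient is the relative-degree structure. Because $h$ has relative degree $r$, we have $\bc^\top\bA^i\bB=0$ for $0\le i\le r-2$. Here $i=0$ is included, which is needed when $r\ge 2$. Two consequences follow.

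First, $\phi_{r-1}$ is monic of degree $r-1$, so $\bp^\top\bB=\bc^\top\bA^{r-1}\bB\neq\mathbf{0}_m$. In particular $\bp\neq \mathbf{0}_n$, and
\[
\bp^\top\bv_1=-\frac{\bc^\top\bA^{r-1}\bB\bG^{-1}\bB^\top(\bA^\top)^{r-1}\bc}{\theta}=-1 .
\]

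Second, $\bc^\top\bA^{i}\bB\bK=0$ for $i\le r-2$. By induction this gives $\bc^\top\bA^i=\bc^\top\bA_0^i$ for $0\le i\le r-1$, and hence $\bp^\top=\bc^\top\phi_{r-1}(\bA_0)$.

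I would next rewrite $\bv_2$ in terms of $\bp$. Since $\phi(\bA)=\phi_{r-1}(\bA)(\bA+\alpha_r\bI)$, and since $\bc^\top\bA^{r-1}\bB\bK=\bp^\top\bB\bK=\bp^\top(\bA-\bA_0)$, one gets
\[
\bv_2^\top=\bp^\top(\bA+\alpha_r\bI)-\bp^\top(\bA-\bA_0)=\bp^\top(\bA_0+\alpha_r\bI).
\]
With this, the first claim is immediate:
\[
\bp^\top\tilde{\bA}=\bp^\top\bA_0+(\bp^\top\bv_1)\bv_2^\top=\bp^\top\bA_0-\bp^\top(\bA_0+\alpha_r\bI)=-\alpha_r\bp^\top .
\]

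For the second claim, let $\lambda\notin\text{spec}(\bA_0)$ and write $\bR(\lambda)=(\lambda\bI-\bA_0)^{-1}$. The matrix determinant lemma gives
\[
\det(\lambda\bI-\tilde{\bA})=\det(\lambda\bI-\bA_0)\bigl(1-\bv_2^\top\bR(\lambda)\bv_1\bigr),
\]
so $\lambda$ is an eigenvalue of $\tilde{\bA}$ iff $\bv_2^\top\bR(\lambda)\bv_1=1$. To evaluate this, I use the identity $(\bA_0+\alpha_r\bI)\bR(\lambda)=(\lambda+\alpha_r)\bR(\lambda)-\bI$. Combined with $\bp^\top\bv_1=-1$, it yields
\[
\bv_2^\top\bR(\lambda)\bv_1=(\lambda+\alpha_r)\,\bp^\top\bR(\lambda)\bv_1+1 .
\]
The eigenvalue condition therefore becomes $(\lambda+\alpha_r)\,\bp^\top\bR(\lambda)\bB\bG^{-1}\bB^\top(\bA^\top)^{r-1}\bc=0$.

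The main step is to reduce $\bp^\top\bR(\lambda)\bB$ to $\bc^\top\bR(\lambda)\bB$. I would use polynomial division:
\[
\phi_{r-1}(\bA_0)=\phi_{r-1}(\lambda)\bI+(\bA_0-\lambda\bI)q(\bA_0),
\]
where $q$ has degree $r-2$. Since $\bR(\lambda)$ commutes with $\bA_0$, this gives
\[
\bp^\top\bR(\lambda)\bB=\phi_{r-1}(\lambda)\,\bc^\top\bR(\lambda)\bB-\bc^\top q(\bA_0)\bB .
\]
The last term vanishes because $\bc^\top\bA_0^i\bB=\bc^\top\bA^i\bB=0$ for $i\le r-2$. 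Hence the condition reads
\[
\phi(\lambda)\,\bc^\top\bR(\lambda)\bB\bG^{-1}\bB^\top(\bA^\top)^{r-1}\bc=0 .
\]
So either $\lambda\in\{-\alpha_i\}_{i=1}^{r}$ or \eqref{eq:eigenvalues-of-tildeA} holds. The value $-\alpha_r$ is already covered by the first claim, and this gives the stated set.

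I expect the index bookkeeping to be the delicate point. This includes showing $\bc^\top\bA^i=\bc^\top\bA_0^i$ up to $i=r-1$, and the degree count ensuring that $\bc^\top q(\bA_0)\bB$ vanishes. These steps must also be checked in the edge case $r=1$, where $\phi_0=1$ and $q=0$.
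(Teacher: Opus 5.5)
Your proof is correct and follows the paper's route. Both arguments apply the matrix determinant lemma to the rank-one update $\tilde{\bA}=\bA_0+\bv_1\bv_2^\top$, then use the relative-degree conditions to collapse $1-\bv_2^\top(\lambda\bI-\bA_0)^{-1}\bv_1$ to $\phi(\lambda)\,\bc^\top(\lambda\bI-\bA_0)^{-1}\bB\bG^{-1}\bB^\top(\bA^\top)^{r-1}\bc/\theta$. Your identity $\bv_2^\top=\bc^\top\phi_{r-1}(\bA)(\bA_0+\alpha_r\bI)$ and the polynomial-division step just repackage the paper's repeated degree-reduction identity, and they make its ``direct computation'' of the left eigenvector explicit.

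One remark on your last line: $-\alpha_r$ itself need not satisfy \eqref{eq:eigenvalues-of-tildeA}. For example, with $n=r=1$ one finds $\tilde{\bA}=-\alpha_1$, and whenever $-\alpha_1\neq\bA_0$ the left-hand side of \eqref{eq:eigenvalues-of-tildeA} at $\lambda=-\alpha_1$ equals $\theta/(-\alpha_1-\bA_0)\neq 0$. So what you and the paper actually prove is the version with $\{-\alpha_i\}_{i=1}^{r}$. The index $r-1$ in the statement is only valid with $-\alpha_r$ set aside, as you implicitly do.
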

\begin{proof}
    Since $\tilde{\bA} = \bA_0 + \bv_1 \bv_2^\top$, $\tilde{\bA}$ is a rank-one update of $\bA_0$. By the matrix determinant lemma (cf.~\cite[Fact 2.16.3]{DSB:09}), 
    if $\lambda$ is an eigenvalue of $\tilde{\bA}$, then it is either an eigenvalue of $\bA_0$ or 
    satisfies $1 - \bv_2^\top (\lambda \bI - \bA_0)^{-1} \bv_1 = 0$.
    After some manipulations, this condition is equivalent to
    \begin{align}\label{eq:matrix-determinant-lemma-2}
        \bc^\top (\phi(\bA)-\bA^r + \bA^{r-1}\lambda)(\lambda \bI - \bA_0)^{-1} \bv_1 = 0.
    \end{align}
    Now, let us show that for $i\in[r-1]$,
    \begin{align}\label{eq:reduce-degree-identity}
        \bc^\top \bA^i (\lambda \bI - \bA_0)^{-1} \bB = \lambda \bc^\top \bA^{i-1} (\lambda \bI - \bA_0)^{-1} \bB.
    \end{align}
    Indeed, this follows from the identity
    \begin{align*}
        \bA_0(\lambda\bI-\bA_0)^{-1} = \lambda (\lambda\bI - \bA_0)^{-1} -\bI,
    \end{align*}
    after left multiplying it by $\bc^\top \bA^{i-1}$, right multiplying it by $\bB$ and using the relative degree assumption. 
    Now, 
    if we let $\phi(s) = s^r + \sum_{i=1}^{r-1} \beta_i s^i$, 
    we get that~\eqref{eq:matrix-determinant-lemma-2} is equivalent to 
    \begin{align*}
        \lambda \bc^\top \bA^{r-1}(\lambda\bI - \bA_0)^{-1} \bv_1 \! + \! \sum_{i=1}^{r-1} \beta_i \bc^\top \bA^i (\lambda\bI-\bA_0)^{-1}\bv_1 = 0.
    \end{align*}
    After repeatedly applying~\eqref{eq:reduce-degree-identity} to each term we get that that~\eqref{eq:matrix-determinant-lemma-2} is equivalent to
    $\phi(\lambda) \bc^\top (\lambda \bI_n - \bA_0)^{-1} \bB  \bG^{-1} \bB^\top (\bA^\top)^{r-1} \bc = 0$,
    from where it follows that the eigenvalues of $\tilde{\bA}$ that are not $\{ -\alpha_i \}_{i=1}^r$ and and are not eigenvalues of $\bA_0$, satisfy~\eqref{eq:eigenvalues-of-tildeA}.
    Finally, the fact that $\bc^\top \phi_{r-1}(\bA)$ is the left eigenvector associated with $-\alpha_r$ follows from a direct computation. 
    %
    %
\end{proof}

From~\eqref{eq:eigenvalues-of-tildeA}, Lemma~\ref{lem:conditions-tildeA-invertible} implies that $\tilde{\bA}$ is invertible 
if $\xi := \bc^\top \bA_0^{-1} \bB \bG^{-1} \bB^\top (\bA^\top)^{r-1} \bc \neq 0$.
The following result leverages Lemma~\ref{lem:conditions-tildeA-invertible} to characterize the set of equilibria of~\eqref{eq:closed-loop-piecewise-affine}.
\begin{proposition}\longthmtitle{Undesired equilibria}\label{prop:existence-of-undesired-equilibrium}
    If $\xi \neq 0$, then $\text{sign}( \eta(-\tilde{\bA}^{-1} \tilde{\bb}) ) = -\text{sign}( \xi )$.
    Furthermore, {\color{black}by letting $\mathcal{E}$ be the set of equilibria of~\eqref{eq:closed-loop-piecewise-affine},}
    \begin{enumerate}
        \item if $\eta(-\tilde{\bA}^{-1} \tilde{\bb}) > 0$ 
        (or equivalently, $\xi < 0$), 
        {\color{black}$\mathcal{E} = \{ \mathbf{0}_n \}$;}
        \item if $\eta(-\tilde{\bA}^{-1} \tilde{\bb}) < 0$ (or equivalently, $\xi > 0$), {\color{black}$\mathcal{E} = \{ \mathbf{0}_n \} \cup \{ -\tilde{\bA}^{-1} \tilde{\bb} \}$;}
        \item {\color{black}if $\xi = 0$, 
        $\mathcal{E} = \{ \mathbf{0}_n \} \cup \setdef{\bx\in\real^n}{\tilde{\bA}\bx = -\tilde{\bb}, \ \eta(\bx) < 0}$. In particular, if $\tilde{\bb} \in \text{Im}(\tilde{\bA})$, $\mathcal{E}$ contains infinite points, whereas if $\tilde{\bb} \notin \text{Im}(\tilde{\bA})$, $\mathcal{E} = \{ \mathbf{0}_n \}$.}
    \end{enumerate}
\end{proposition}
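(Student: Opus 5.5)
We first compute $\eta(-\tilde{\bA}^{-1}\tilde{\bb})$ in closed form via Sherman--Morrison, and then classify the equilibria region by region.

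\textbf{Closed form for $\eta$.} Since $\bA_0$ is Hurwitz it is invertible. With $\tilde{\bA} = \bA_0 + \bv_1\bv_2^\top$, the Sherman--Morrison formula gives, whenever $1+\bv_2^\top \bA_0^{-1}\bv_1 \neq 0$,
\begin{align*}
\bv_2^\top \tilde{\bA}^{-1}\bv_1 = \frac{\bv_2^\top \bA_0^{-1}\bv_1}{1+\bv_2^\top\bA_0^{-1}\bv_1}.
\end{align*}
Using $\tilde{\bb} = \alpha d\bv_1$, this yields
\begin{align*}
\eta(-\tilde{\bA}^{-1}\tilde{\bb}) = \alpha d\bigl(1 - \bv_2^\top\tilde{\bA}^{-1}\bv_1\bigr) = \frac{\alpha d}{1+\bv_2^\top\bA_0^{-1}\bv_1}.
\end{align*}
Since $\alpha d>0$ by Assumption~\ref{as:origin-interior}, the sign of $\eta(-\tilde{\bA}^{-1}\tilde{\bb})$ is the sign of $1+\bv_2^\top\bA_0^{-1}\bv_1 = 1-\bv_2^\top(0\cdot\bI-\bA_0)^{-1}\bv_1$.

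\textbf{Relating this to $\xi$.} This is the key step. Evaluate the manipulation in the proof of Lemma~\ref{lem:conditions-tildeA-invertible} at $\lambda=0$. That proof showed
\begin{align*}
1-\bv_2^\top(\lambda\bI-\bA_0)^{-1}\bv_1 = -\frac{\phi(\lambda)}{\theta}\,\bc^\top(\lambda\bI-\bA_0)^{-1}\bB\bG^{-1}\bB^\top(\bA^\top)^{r-1}\bc,
\end{align*}
and the sign and constant in this identity must be tracked carefully; I would redo the computation explicitly rather than rely on the stated ``equivalent to''. Here the term $1$ is absorbed because $\bv_2^\top$ contains $-\bc^\top\bA^{r-1}\bB\bK$ and $\bc^\top\bA^{r-1}\bB\bG^{-1}\bB^\top(\bA^\top)^{r-1}\bc=\theta$. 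At $\lambda=0$ we have $\phi(0)=\alpha>0$ and $(0\cdot\bI-\bA_0)^{-1}=-\bA_0^{-1}$, so the identity gives
\begin{align*}
1+\bv_2^\top\bA_0^{-1}\bv_1 = -\frac{\alpha}{\theta}\,\xi \cdot(\pm 1),
\end{align*}
with the overall sign fixed by the careful computation; the statement indicates it should be $-\alpha\xi/\theta$. If that constant is confirmed, the first claim follows whenever $\xi\ne0$. In that case the denominator in the Sherman--Morrison step is nonzero, $\tilde{\bA}$ is invertible (consistent with the remark after Lemma~\ref{lem:conditions-tildeA-invertible}), and the sign identity $\text{sign}(\eta(-\tilde{\bA}^{-1}\tilde{\bb})) = -\text{sign}(\xi)$ holds. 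I expect this sign and constant bookkeeping to be the main obstacle.

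\textbf{Classifying the equilibria.} On $\Rc_+$ the dynamics are $\bA_0\bx$ with $\bA_0$ invertible, so the only equilibrium there is $\mathbf{0}_n$, which lies in $\Rc_+$ since $\eta(\mathbf{0}_n)=\alpha d>0$. On $\Rc_-$, equilibria are exactly the points with $\tilde{\bA}\bx=-\tilde{\bb}$ and $\eta(\bx)<0$.
\begin{itemize}
\item If $\xi\ne0$, $\tilde{\bA}$ is invertible, so the unique candidate is $-\tilde{\bA}^{-1}\tilde{\bb}$. It is an equilibrium iff $\eta(-\tilde{\bA}^{-1}\tilde{\bb})<0$, i.e.\ iff $\xi>0$. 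This gives items 1 and 2.
\item If $\xi=0$, the description of $\mathcal{E}$ in item 3 is immediate. When $\tilde{\bb}\notin\text{Im}(\tilde{\bA})$ there is no solution of $\tilde{\bA}\bx=-\tilde{\bb}$, so $\mathcal{E}=\{\mathbf{0}_n\}$.
\item If $\xi=0$ and $\tilde{\bb}\in\text{Im}(\tilde{\bA})$, the solution set is an affine subspace $\bx_p+\ker\tilde{\bA}$. The kernel is nontrivial because $\tilde{\bA}$ is singular: $\lambda=0$ satisfies~\eqref{eq:eigenvalues-of-tildeA}, equivalently the determinant factor $1+\bv_2^\top\bA_0^{-1}\bv_1=0$. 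To see that infinitely many of these points satisfy $\eta<0$, evaluate $\eta$ on the subspace. Take $\bw\in\ker\tilde{\bA}$, $\bw\neq0$; then $\bA_0\bw = -\bv_1(\bv_2^\top\bw)$, so $\bv_2^\top\bw\ne0$ (otherwise $\bA_0\bw=0$). Hence $\eta$ is a nonconstant affine function along the line $\bx_p+t\bw$, and $\eta<0$ on a half-line, which contains infinitely many points.
\end{itemize}
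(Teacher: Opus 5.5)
\textbf{Gap: the sign of $1+\bv_2^\top\bA_0^{-1}\bv_1$ is never determined.} Your reduction $\eta(-\tilde{\bA}^{-1}\tilde{\bb}) = \alpha d/(1+\bv_2^\top\bA_0^{-1}\bv_1)$ is correct. Your classification of equilibria is also correct, and your kernel argument for item 3 is the paper's. The gap is the step that carries the first claim, and with it the ``equivalently'' in items 1 and 2: you need the sign of $1+\bv_2^\top\bA_0^{-1}\bv_1$ relative to $\xi$. You leave a factor $\pm 1$ and then fix it by appeal to the statement being proved, which is circular.

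Moreover, the identity you attribute to the proof of Lemma~\ref{lem:conditions-tildeA-invertible} has the wrong sign. Setting $\lambda=0$ in $1-\bv_2^\top(\lambda\bI-\bA_0)^{-1}\bv_1 = -\frac{\phi(\lambda)}{\theta}\bc^\top(\lambda\bI-\bA_0)^{-1}\bB\bG^{-1}\bB^\top(\bA^\top)^{r-1}\bc$ gives $1+\bv_2^\top\bA_0^{-1}\bv_1 = +\alpha\xi/\theta$. That would yield the opposite conclusion $\text{sign}(\eta)=\text{sign}(\xi)$. That lemma cannot supply the constant in any case: its proof only asserts that two scalar equations have the same zeros.

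\textbf{The missing computation.} It is short, and it is exactly the paper's identity~\eqref{eq:simplification-expression}. Write $\phi(s)=s^r+\sum_{j=1}^{r-1}\beta_j s^j+\alpha$. Using $\bA\bA_0^{-1}=\bI+\bB\bK\bA_0^{-1}$ and $\bc^\top\bA^j\bB=0$ for $0\le j\le r-2$, you get:
\begin{itemize}
\item $\bc^\top\bA^j\bA_0^{-1}=\bc^\top\bA^{j-1}$ for $1\le j\le r-1$;
\item $\bc^\top\bA^r\bA_0^{-1}=\bc^\top\bA^{r-1}+\bc^\top\bA^{r-1}\bB\bK\bA_0^{-1}$, whose last term is cancelled by the $-\bc^\top\bA^{r-1}\bB\bK$ part of $\bv_2^\top$.
\end{itemize}
Hence
\begin{align*}
\bv_2^\top\bA_0^{-1} = \bc^\top\bA^{r-1} + \sum_{j=1}^{r-1}\beta_j\bc^\top\bA^{j-1} + \alpha\bc^\top\bA_0^{-1}.
\end{align*}
Now use $\bc^\top\bA^{r-1}\bv_1=-1$ (this is where the ``$1$'' is absorbed), $\bc^\top\bA^{j-1}\bv_1=0$ for $1\le j\le r-1$, and $\bc^\top\bA_0^{-1}\bv_1=-\xi/\theta$. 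This gives $1+\bv_2^\top\bA_0^{-1}\bv_1=-\alpha\xi/\theta$, and therefore $\eta(-\tilde{\bA}^{-1}\tilde{\bb})=-d\theta/\xi$. The paper writes $\theta^2$ here, but only the sign matters. Equivalently, the correct general identity carries $+\phi(\lambda)/\theta$, not $-\phi(\lambda)/\theta$.

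With this inserted, your proof is complete. It is the paper's argument in a leaner form, since you only need the scalar $\bv_2^\top\tilde{\bA}^{-1}\bv_1$ rather than the full matrix $\bDelta$.

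\textbf{Side remark on item 3.} Any solution of $\tilde{\bA}\bx=-\tilde{\bb}$ satisfies $\bA_0\bx=-\eta(\bx)\bv_1$, hence $\eta(\bx)\bigl(1+\bv_2^\top\bA_0^{-1}\bv_1\bigr)=\alpha d>0$. This recovers your formula without Sherman--Morrison. It also shows that when $\xi=0$ no such solution exists, so $\tilde{\bb}\notin\text{Im}(\tilde{\bA})$. The ``infinitely many equilibria'' branch of item 3 therefore never actually occurs.
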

\begin{proof}
    Since $\bA_0$ is Hurwitz, the only possible equilibrium in $\Rc_{+}$ is the origin. First assume $\xi \neq 0$. By Lemma~\ref{lem:conditions-tildeA-invertible},~$\tilde{\bA}$ is invertible, and the only possible equilibrium in $\Rc_{-}$ is $\tilde{\bp} = -\tilde{\bA}^{-1} \tilde{\bb}$.
    However, in order for $\tilde{\bp}$ to be an 
    equilibrium of~\eqref{eq:closed-loop-piecewise-affine}, we need that $\tilde{\bp}\in\Rc_{-}$, i.e., $\eta(-\tilde{\bA}^{-1} \tilde{\bb}) < 0$, in which case {\color{black} $\mathcal{E} = \{ \mathbf{0}_n \}\cup \{ -\tilde{\bA}^{-1} \tilde{\bb} \}$.}
    Alternatively, if $\eta(-\tilde{\bA}^{-1} \tilde{\bb}) \geq 0$, 
    {\color{black}$\mathcal{E} = \{ \mathbf{0}_n \}$.}
    On the other hand, if $\xi = 0$, by Lemma~\ref{lem:conditions-tildeA-invertible} the matrix $\tilde{\bA}$ is not invertible and therefore 
    {\color{black}
    $\mathcal{E} = \{ \mathbf{0}_n \} \cup \setdef{ \bx\in\real^n }{ \tilde{\bA} \bx = -\tilde{\bb}, \ \eta(\bx) < 0}$.
    To see that $\mathcal{E}$ contains infinite points if $\tilde{\bb}\in\text{Im}(\tilde{\bA})$, we note that $\setdef{\bx\in\real^n}{\tilde{\bA}\bx = -\tilde{\bb}}$ is a subspace of dimension equal to the dimension of $\ker(\tilde{\bA})$), and the only way it does not intersect with $\setdef{\bx\in\real^n}{\eta(\bx) < 0}$ is if the vectors in $\ker(\tilde{\bA})$ are orthogonal to $\bv_2$ 
    (the normal vector to the hyperplane $\eta(\bx) = 0$). However, if $\bv\in\real^n$ is such that $\tilde{\bA}\bv = \mathbf{0}_n$ and $\bv_2^\top \bv = 0$, we have that $\bA_0 \bv = \mathbf{0}_n$, which is a contradiction because $\bA_0$ is Hurwitz. Hence, if $\tilde{\bb} \in \text{Im}(\tilde{\bA})$, $\mathcal{E}$ contains infinite points, whereas if $\tilde{\bb} \notin \text{Im}(\tilde{\bA})$, $\mathcal{E} = \{ \mathbf{0}_n \}$.}
    Now we show that if $\xi \neq 0$, then
    $\text{sign}( \eta(-\tilde{\bA}^{-1} \tilde{\bb}) ) = -\text{sign}( \xi )$.
    By using
    $\bA \bA_0^{-1} = \bI + \bB \bK \bA_0^{-1}$ and the relative
    degree condition, we have 
    \begin{align}\label{eq:simplification-expression}
        \bv_2^\top \bA_0^{-1} = \bc^\top \bA^{r-1} + \alpha \bc^\top \bA_0^{-1} + \sum_{j=1}^{r-1} \beta_j \bc^\top \bA^{j-1},
    \end{align}
    where $\{ \beta_i \}_{i=1}^{r-1}$ are as defined in the proof of Lemma~\ref{lem:conditions-tildeA-invertible}.
    Now, as shown in the proof of Lemma~\ref{lem:conditions-tildeA-invertible}, $\tilde{\bA}$ is a rank-one update of $\bA_0$,
    and therefore $\tilde{\bA}^{-1}$ can be computed using  the Sherman-Morrison formula (cf.~\cite[Fact 2.16.3]{DSB:09}):
    \begin{align}\label{eq:sherman-morrison}
        \tilde{\bA}^{-1} = \bA_0^{-1}\Big(
        \bI - \frac{\bv_1 \bv_2^\top \bA_0^{-1} }{ 1 + \bv_2^\top \bA_0^{-1} \bv_1 }
        \Big).
    \end{align}
    By using~\eqref{eq:simplification-expression} in~\eqref{eq:sherman-morrison}, we get
    $\tilde{\bA}^{-1} = \bA_0^{-1}(\bI - \bDelta)$,
    where 
    \begin{align*}
        \bDelta \! = \! \frac{ \bB \bG^{-1}\bB^\top (\bA^\top)^{r-1} \bc \bc^\top ( \bA^{r-1} \! + \! \alpha \bA_0^{-1} \! + \! \sum_{j=1}^{r-1} \beta_j \bA^{j-1} ) }{ \alpha \xi }
    \end{align*}
    Now, using again~\eqref{eq:simplification-expression} 
    in $\eta(-\tilde{\bA}^{-1}\tilde{\bb})$, we get 
    \begin{align*}
        \eta(-\tilde{\bA}^{-1}\tilde{\bb}) \! = \! -\bc^\top \bA^{r-1} (\bI - \bDelta) \tilde{\bb} \! - \! \alpha \bc^\top \bA_0^{-1} (\bI - \bDelta)\tilde{\bb} \! + \! \alpha d.
    \end{align*}
    After some computations, we can show that
    \begin{align*}
        \bc^\top \bA^{r-1}(\bI-\bDelta)\tilde{\bb} = 
        \frac{d}{\xi} \theta^2, \ 
        \bc^\top \bA_0^{-1} (\bI-\bDelta)\tilde{\bb} = d,
    \end{align*}
    Hence, $\eta(-\tilde{\bA}^{-1}\tilde{\bb}) = -\frac{d \theta^2}{\xi}$, and the result follows.
\end{proof}

As shown in Lemma 4.1 of~\cite{PM-YC-EDA-JC:25-arxiv}, $\tilde{\bp}\in\partial\Cc_{r-1}$. 
Interestingly, 
the existence of undesired equilibria for~\eqref{eq:closed-loop-piecewise-affine} can also be characterized in terms of the parity of the positive real eigenvalues of $\tilde{\bA}$.

\begin{proposition}\longthmtitle{Undesired equilibria and positive real eigenvalues}\label{prop:existence-undes-eq-alternative-condition}
    Suppose that $\tilde{\bA}$ is invertible.
    Then, the origin is the only equilibrium of~\eqref{eq:closed-loop-piecewise-affine} if and only if the number of positive real eigenvalues of $\tilde{\bA}$ is even.
\end{proposition}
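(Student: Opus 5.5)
The plan is to reduce the statement to Proposition~\ref{prop:existence-of-undesired-equilibrium} by relating the sign of $\xi$ to the sign of $\det(\tilde{\bA})$. That proposition says that, when $\xi\neq0$, the origin is the only equilibrium if and only if $\xi<0$. When $\xi = 0$ and $\tilde{\bA}$ is invertible, we have $\tilde{\bb}\in\text{Im}(\tilde{\bA})$ trivially. Moreover, $\eta(-\tilde{\bA}^{-1}\tilde{\bb})$ then needs separate handling, so I will first establish the determinant identity below and deduce that invertibility forces $\xi\neq 0$.

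The parity claim follows from a sign fact. The eigenvalues of a real matrix come in complex conjugate pairs, each contributing $|\lambda|^2>0$ to the determinant. Since $\tilde{\bA}$ is invertible, its negative real eigenvalues contribute $(-1)^{n_-}$ and its positive real ones contribute a positive factor. Hence $\text{sign}\det(\tilde{\bA}) = (-1)^{n-n_+}$, where $n_+$ is the number of positive real eigenvalues counted with multiplicity. In the same way, since $\bA_0$ is Hurwitz, $\text{sign}\det(\bA_0) = (-1)^n$. The statement is therefore equivalent to showing $\text{sign}\det(\tilde{\bA}) = \text{sign}\det(\bA_0)$ exactly when $\xi<0$, that is, $\det(\tilde{\bA})/\det(\bA_0)$ has the same sign as $-\xi$.

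The key step is to compute this ratio with the matrix determinant lemma, as in the proof of Lemma~\ref{lem:conditions-tildeA-invertible}:
\begin{align*}
\det(\tilde{\bA}) = \det(\bA_0)\,\big(1+\bv_2^\top \bA_0^{-1}\bv_1\big).
\end{align*}
I then evaluate $1+\bv_2^\top\bA_0^{-1}\bv_1$ using~\eqref{eq:simplification-expression} together with the relative degree condition $\bc^\top\bA^{j}\bB=0$ for $j\le r-2$. Every term $\beta_j\bc^\top\bA^{j-1}\bv_1$ vanishes. The term $\bc^\top\bA^{r-1}\bv_1$ equals $-1$ by the definition of $\theta$. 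What remains is $\alpha\,\bc^\top\bA_0^{-1}\bv_1 = -\alpha\xi/\theta$. This gives
\begin{align*}
1+\bv_2^\top\bA_0^{-1}\bv_1 = -\frac{\alpha\,\xi}{\theta},
\end{align*}
which agrees with the $\alpha\xi$ denominator appearing in $\bDelta$. Since $\alpha>0$ and $\theta>0$ (because $\bc^\top\bA^{r-1}\bB\neq\mathbf{0}_m$ and $\bG\succ0$), we get $\text{sign}\det(\tilde{\bA}) = -\text{sign}(\xi)\,\text{sign}\det(\bA_0)$. In particular, $\tilde{\bA}$ is invertible if and only if $\xi\neq0$, which removes case 3.

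Combining the steps: the origin is the unique equilibrium if and only if $\xi<0$, if and only if $\det(\tilde{\bA})$ and $\det(\bA_0)$ have the same sign, if and only if $(-1)^{n-n_+}=(-1)^n$, if and only if $n_+$ is even. The main obstacle is getting the exact simplification of $1+\bv_2^\top\bA_0^{-1}\bv_1$ right, in particular the sign conventions in $\bv_1$ and the correct use of~\eqref{eq:simplification-expression}. I would double-check it against the denominator of $\bDelta$ in the proof of Proposition~\ref{prop:existence-of-undesired-equilibrium}.
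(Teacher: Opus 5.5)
Your proof is correct, and it takes a more direct route than the paper's.

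\textbf{The paper's route.} It works with the transfer function $H(\lambda)=\bc^\top(\lambda\bI-\bA_0)^{-1}\bB\bG^{-1}\bB^\top(\bA^\top)^{r-1}\bc$ and proceeds in four steps:
\begin{enumerate}
\item a large-$\lambda$ expansion shows that the leading coefficient of $H$ is positive;
\item $H$ is factored into zeros and poles, and since the poles are the Hurwitz eigenvalues of $\bA_0$, the sign of $H(0)=-\xi$ is fixed by the parity of the positive real zeros of $H$;
\item those zeros are identified with the positive real eigenvalues of $\tilde{\bA}$ through Lemma~\ref{lem:conditions-tildeA-invertible};
\item Proposition~\ref{prop:existence-of-undesired-equilibrium} is invoked.
\end{enumerate}

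\textbf{Your route.} You apply the matrix determinant lemma only at $\lambda=0$ and obtain the exact identity $\det(\tilde{\bA})=-\frac{\alpha\xi}{\theta}\det(\bA_0)$. Your evaluation of $1+\bv_2^\top\bA_0^{-1}\bv_1$ via~\eqref{eq:simplification-expression} is right and agrees with the $\alpha\xi$ denominator of $\bDelta$. You then read the parity directly off the product of the eigenvalues.

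\textbf{What your route buys.}
\begin{itemize}
\item It avoids the high-frequency expansion and the pole--zero bookkeeping.
\item Most importantly, it skips the step of matching positive real zeros of $H$ with positive real eigenvalues of $\tilde{\bA}$ including multiplicities. That step needs both directions of the correspondence, whereas Lemma~\ref{lem:conditions-tildeA-invertible} is stated in only one, and the paper leaves it implicit.
\item It makes explicit that eigenvalues are counted with algebraic multiplicity.
\item As a by-product, it shows that $\tilde{\bA}$ is invertible if and only if $\xi\neq0$. The invertibility hypothesis therefore rigorously places you in cases 1 or 2 of Proposition~\ref{prop:existence-of-undesired-equilibrium}. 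The paper asserts the direction ``$\xi=0$ implies $\tilde{\bA}$ singular'' but does not derive it from Lemma~\ref{lem:conditions-tildeA-invertible} as stated.
\end{itemize}

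\textbf{What the paper's route buys.} It yields the structural fact that the relevant eigenvalues are zeros of $H$, i.e., invariant zeros of the SISO system $\mathcal{S}$. The paper reuses this in Corollary~\ref{prop:global-exponential-stab-minimum-phase-siso}, but the present statement does not need it.
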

\begin{proof}
    As shown in Lemma~\ref{lem:conditions-tildeA-invertible}, the eigenvalues of $\tilde{\bA}$ that are not $\{ -\alpha_i \}_{i=1}^r$ and are not eigenvalues of $\bA_0$ satisfy $H(\lambda) := \bc^\top (\lambda\bI - \bA_0)^{-1} \bB \bG^{-1} \bB^\top (\bA^\top)^{r-1} \bc = 0$.
    Note that for $\lambda > \rho(\bA_0)$, it holds that (cf.~\cite[5.6.P26]{RAH-CRJ:12})
    $(\lambda \bI - \bA_0)^{-1} = \frac{1}{\lambda}\sum_{i=0}^{\infty} \frac{\bA_0^i}{\lambda^{i}}$.
    Using the fact that 
    $\bc^\top \bA^i \bB = \mathbf{0}_m$ for $i\in[r-2]$ 
    it follows that $\bc^\top \bA_0^i \bB = \mathbf{0}_m$ for $i\in[r-2]$.
    Hence, for $\lambda > \rho(\bA_0)$,
    it holds that
    $H(\lambda) = \frac{ \theta^2 }{\lambda^r} + \mathcal{O}(\frac{1}{\lambda^{r+1}})$.
    On the other hand, $(\lambda\bI - \bA_0)^{-1} = \frac{ \text{adj}(\lambda\bI - \bA_0) }{ \text{det}(\lambda\bI - \bA_0) }$. Hence, 
    \begin{align}\label{eq:f-lambda-rational}
        H(\lambda) = \frac{\sigma_1 \prod_{i=1}^{n-1} (\lambda-z_i) }{ \sigma_2 \prod_{i=1}^{n} (\lambda-p_i) },
    \end{align}   
    where $\{ z_i \}_{i=1}^{n-1}$ (resp. $\{ p_i \}_{i=1}^n$) are the zeros (resp. poles) of $H$.
    Since $H(\lambda) = \frac{ \theta^2 }{\lambda^r} + \mathcal{O}(\frac{1}{\lambda^{r+1}})$, there must be $n-r-1$ zero-pole cancellations in~\eqref{eq:f-lambda-rational}, and $H(\lambda) = \frac{\sigma_1}{\sigma_2 \lambda^r} + \mathcal{O}(\frac{1}{\lambda^{r+1}})$ for $\lambda > \rho(\bA_0)$, which implies that $\frac{\sigma_1}{\sigma_2} = \theta^2 > 0$.
    Note also that the poles of $H$ are the eigenvalues of $\bA_0$, and since $\bA_0$ is Hurwitz, $\prod_{i=1}^n (-p_i) > 0$ (indeed, conjugate pairs contribute as a positive factor to the product and real eigenvalues also contribute with a positive factor because they are negative).
    Similarly, (since none of the zeros can be zero because $\tilde{\bA}$ is invertible) the sign of $\prod_{i=1}^n (-z_i)$ is positive if and only if the number of positive real zeros is even (indeed, the negative real and conjugate pairs contribute to a positive factor in the product).
    Hence, from~\eqref{eq:f-lambda-rational}, the sign of $H(0) = -\xi$ is equal to the number of positive real zeros of $H$.
    By {\color{black}Proposition}~\ref{prop:existence-of-undesired-equilibrium}, the origin is the only equilibrium of~\eqref{eq:closed-loop-piecewise-affine} if and only if the number of positive real eigenvalues of $\tilde{\bA}$ is even.
\end{proof}

\section{Stability of the origin}\label{sec:stability-origin}

%

\subsection{Global Exponential Stability}

Here we study conditions under which the origin is globally exponentially stable (GES) under~\eqref{eq:closed-loop-piecewise-affine}.
The following is the main result of this section:

\begin{theorem}\longthmtitle{GES}\label{thm:global-exponential-stability}
    If $\tilde{\bA}$ is Hurwitz {\color{black}
    and $\bA_0 \tilde{\bA}$ has no negative real eigenvalues}, then the origin is GES under~\eqref{eq:closed-loop-piecewise-affine}.
\end{theorem}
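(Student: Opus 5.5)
The plan is to exhibit a common quadratic Lyapunov function (CQLF) for $\bA_0$ and $\tilde{\bA}$, and then show that the piecewise-affine vector field~\eqref{eq:closed-loop-piecewise-affine} can be written at every point as a state-dependent convex combination of $\bA_0\bx$ and $\tilde{\bA}\bx$.

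\emph{Step 1 (CQLF).} By construction $\tilde{\bA}-\bA_0 = \bv_1\bv_2^\top$ has rank one, and both matrices are Hurwitz ($\bA_0$ by the choice of $\bK$, $\tilde{\bA}$ by hypothesis). I will invoke the Shorten--Narendra theorem on CQLFs for pairs of Hurwitz matrices differing by a rank-one matrix. It states that such a pair admits a CQLF if and only if the product $\bA_0\tilde{\bA}$ has no negative real eigenvalues; an equivalent formulation is that $\bA_0 + \mu\tilde{\bA}$ (equivalently $\bA_0\tilde{\bA}^{-1}$) is nonsingular for all $\mu\geq 0$. The hypothesis of Theorem~\ref{thm:global-exponential-stability} is exactly this condition. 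We therefore obtain $\bP\succ 0$ and some $\epsilon>0$ such that $\bA_0^\top\bP+\bP\bA_0 \preceq -\epsilon\bI$ and $\tilde{\bA}^\top\bP+\bP\tilde{\bA}\preceq -\epsilon\bI$. The main obstacle is matching the hypothesis precisely to the form used in that reference. If needed, I would briefly recall the argument: by the frequency-domain/KYP characterization of rank-one CQLFs, existence is equivalent to $\text{Re}\,(1-\bv_2^\top(j\omega\bI-\bA_0)^{-1}\bv_1)$-type conditions. Via the matrix determinant lemma, these reduce to the absence of negative real eigenvalues of $\bA_0\tilde{\bA}$.

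\emph{Step 2 (convex embedding).} Write the closed loop as $\dot{\bx} = \bA_0\bx + \bv_1\min\{\eta(\bx),0\}$, which agrees with~\eqref{eq:closed-loop-piecewise-affine} since $\tilde{\bA}\bx+\tilde{\bb} = \bA_0\bx + \bv_1\eta(\bx)$. Let $s=\bv_2^\top\bx$. On $\Rc_-$ we have $s+\alpha d<0$, and hence $s<-\alpha d<0$ by Assumption~\ref{as:origin-interior} and $\alpha>0$. Define $\sigma(\bx) = (s+\alpha d)/s$ on $\Rc_-$ and $\sigma(\bx)=0$ on $\Rc_+$. Then $\sigma(\bx)\in[0,1)$ and $\min\{\eta(\bx),0\} = \sigma(\bx)\,\bv_2^\top\bx$. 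It follows that
\begin{align*}
\dot{\bx} = \big((1-\sigma(\bx))\bA_0 + \sigma(\bx)\tilde{\bA}\big)\bx .
\end{align*}

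\emph{Step 3 (conclusion).} With $V(\bx)=\bx^\top\bP\bx$, convexity gives
\begin{align*}
\dot V \le -\epsilon\norm{\bx}^2 \le -\tfrac{\epsilon}{\lambda_{\max}(\bP)}V
\end{align*}
along every solution. Solutions exist globally and are unique since $\bu^*$ is locally Lipschitz. The bound $\lambda_{\min}(\bP)\norm{\bx}^2\le V\le\lambda_{\max}(\bP)\norm{\bx}^2$ then yields global exponential convergence to the origin, i.e., GES.
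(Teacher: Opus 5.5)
Your proof is correct. Step~1 coincides with the paper's: the paper also obtains the CQLF $\bx^\top\bP\bx$ for $\bA_0,\tilde{\bA}$ directly from the rank-one CQLF theorem (cited in King and Nathanson's form), using exactly the hypothesis that $\bA_0\tilde{\bA}$ has no negative real eigenvalues.

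The two routes diverge after that. The paper does not build the Lyapunov decrease itself. It invokes the result of Pavlov, van de Wouw and Nijmeijer that a continuous piecewise-affine system whose linear parts share a quadratic Lyapunov function is exponentially convergent, meaning all solutions approach one another exponentially. It then specializes this to the zero solution.

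Your Step~2 instead writes the closed loop explicitly as $\dot{\bx} = ((1-\sigma(\bx))\bA_0+\sigma(\bx)\tilde{\bA})\bx$. Here $\sigma(\bx)$ is exactly the fraction of the segment $[\mathbf{0}_n,\bx]$ that lies in $\Rc_-$. This is the incremental mechanism behind the cited theorem, restricted to comparing a solution with the origin. Assumption~\ref{as:origin-interior} is what guarantees $\sigma\in[0,1)$.

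Each version buys something different. Yours is self-contained and gives explicit constants: decay rate $\epsilon/(2\lambda_{\max}(\bP))$ and overshoot factor $\sqrt{\lambda_{\max}(\bP)/\lambda_{\min}(\bP)}$. The paper's gives the stronger incremental property, which survives additive time-varying inputs such as the command term in the aircraft example. You would recover that property by applying the same convex embedding to the difference of two solutions, rather than to a solution and the origin.

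One correction to an aside in Step~1. Nonsingularity of $\bA_0+\mu\tilde{\bA}$ for all $\mu\geq 0$ is \emph{not} equivalent to the product condition, because for Hurwitz matrices with a rank-one difference it always holds. Indeed, $\det\tilde{\bA}$ and $\det\bA_0$ share the sign $(-1)^n$, so $1+\bv_2^\top\bA_0^{-1}\bv_1>0$. Then $\det(\bA_0+\mu\tilde{\bA}) = (1+\mu)^n\det(\bA_0)\big(1+\tfrac{\mu}{1+\mu}\bv_2^\top\bA_0^{-1}\bv_1\big)\neq 0$. Taken literally, your reformulation would therefore assert a CQLF in cases where none exists, such as the counterexample the paper cites. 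The correct reformulation is that $\bA_0+\mu\tilde{\bA}^{-1}$ be nonsingular for all $\mu\geq0$, since $\bA_0\tilde{\bA}+\mu\bI=(\bA_0+\mu\tilde{\bA}^{-1})\tilde{\bA}$. Your main line uses the correct product condition, so the argument is unaffected.
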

\begin{proof}
    {\color{black}
    Since $\bA_0, \tilde{\bA}$ are Hurwitz, $\bA_0 -\tilde{\bA}$ is rank 1 and $\bA_0 \tilde{\bA}$ has no negative real eigenvalues~\cite[Theorem 1]{CK-MN:06} shows that $\bA_0, \tilde{\bA}$ have a CQLF of the form $V(\bx) = \bx^\top \bP \bx$.
    (In fact, the assumptions are necessary and sufficient for such a CQLF to exist).}
    Next, we use~\cite[{\color{black}Theorem} 1]{AP-NV-HN:05}.
    Note that~\eqref{eq:closed-loop-piecewise-affine} is a piecewise affine system of the form considered in~\cite[Section III]{AP-NV-HN:05},
    and by an argument analogous to the one in~\cite[{\color{black}Theorem} 2]{XX-PT-JWG-ADA:15},~\eqref{eq:closed-loop-piecewise-affine} is locally Lipschitz (and hence continuous).
    Now, $\bP$ satisfies $\bP \bA_0 + \bA_0^\top \bP \prec 0$ and $\bP \tilde{\bA} + \tilde{\bA}^\top \bP \prec 0$.
    Hence, by~\cite[{\color{black}Theorem} 1]{AP-NV-HN:05},~\eqref{eq:closed-loop-piecewise-affine} is exponentially convergent (cf.~\cite[Definition 1]{AP-NV-HN:05}). Since $\bx(t) \equiv 0$ for all $t\geq 0$ is a solution of~\eqref{eq:closed-loop-piecewise-affine}, 
    by~\cite[Definition 1]{AP-NV-HN:05} the origin is GES.
\end{proof}

{\color{black}
We note that even if $\tilde{\bA}$ is Hurwitz, $\bA_0 \tilde{\bA}$ might have negative real eigenvalues. This is the case for the example in~\cite[Section III.B]{NM-JC-PS-KZ:25}, for which the origin is not GES.}
%
{\color{black}
Given Theorem~\ref{thm:global-exponential-stability}, a natural question is how to design $\bK$ so that the conditions therein are satisfied.
We next show how such $\bK$ can be obtained by solving a pair of LMIs.}

\begin{lemma}\longthmtitle{Nominal controller design as a pair of LMIs}\label{lem:nominal-controller-pair-of-lmis}
    Let $\hat{\bA} = \bA + \bv_1 \bc^\top \phi(\bA)$, $\hat{\bB} = ( \bI + \bv_1 \bc^\top \bA^{r-1} ) \bB$.
    Then, the LMIs in $\bQ$, $\bY$ defined by
    \begin{subequations}
    \begin{align}
        \bA \bQ + \bQ \bA^\top + \bB \bY + \bY^\top \bB^\top \prec 0, \\
        \hat{\bA} \bQ + \bQ \hat{\bA}^\top + \hat{\bB} \bY + \bY^\top \hat{\bB}^\top \prec 0,
    \end{align}
    \label{eq:lmis-common-LF}
    \end{subequations}
    are feasible if and only if there exists a gain $\bK$ such that the corresponding matrices $\bA_0$ and $\tilde{\bA}$ admit a CQLF. 
    In this case, $\bP = \bQ^{-1}$, $\bK = -\bY \bQ^{-1}$ are such that $\bx^\top \bP \bx$ is a CQLF for $\bA_0, \tilde{\bA}$, and the origin is GES for~\eqref{eq:closed-loop-piecewise-affine}.
\end{lemma}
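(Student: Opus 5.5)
The plan is to show that both $\bA_0$ and $\tilde{\bA}$ are closed-loop matrices of the form ``open-loop matrix minus input matrix times $\bK$'', with open-loop data that do not depend on $\bK$. Once this is established, the standard congruence change of variables $\bQ = \bP^{-1}$, $\bY = -\bK\bQ$ turns the two Lyapunov inequalities into the LMIs~\eqref{eq:lmis-common-LF}. Throughout I read the LMIs as including the implicit constraint $\bQ \succ 0$, as is customary.

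\emph{Step 1: an affine parametrization of $\tilde{\bA}$ in $\bK$.} I will use the definitions of $\bv_1$ and $\bv_2$ and expand:
\begin{align*}
\tilde{\bA} &= \bA - \bB\bK + \bv_1\bc^\top(\phi(\bA) - \bA^{r-1}\bB\bK) \\
&= \bA + \bv_1\bc^\top\phi(\bA) - (\bI + \bv_1\bc^\top\bA^{r-1})\bB\bK = \hat{\bA} - \hat{\bB}\bK .
\end{align*}
The crucial observation is that $\bv_1$ depends only on $\bA$, $\bB$, $\bG$, $\bc$ and $r$, and not on $\bK$. Hence $\hat{\bA}$ and $\hat{\bB}$ are fixed matrices, and $\tilde{\bA}$ is affine in $\bK$ exactly as $\bA_0 = \bA - \bB\bK$ is. I expect this identification to be the main (though mild) obstacle. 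Everything else is the classical state-feedback LMI argument, and it only works because the dependence of $\tilde{\bA}$ on $\bK$ enters linearly through $\bv_2$ and the factor $\bv_1$ is $\bK$-independent.

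\emph{Step 2: equivalence.} First suppose some $\bK$ and some $\bP \succ 0$ satisfy $\bP\bA_0 + \bA_0^\top\bP \prec 0$ and $\bP\tilde{\bA} + \tilde{\bA}^\top\bP \prec 0$. I will multiply both inequalities on the left and on the right by $\bQ = \bP^{-1} \succ 0$. This congruence preserves definiteness and gives $\bA_0\bQ + \bQ\bA_0^\top \prec 0$, and the analogous inequality for $\tilde{\bA}$. Substituting $\bA_0\bQ = \bA\bQ - \bB\bK\bQ = \bA\bQ + \bB\bY$ with $\bY = -\bK\bQ$, and similarly $\tilde{\bA}\bQ = \hat{\bA}\bQ + \hat{\bB}\bY$, yields exactly the two LMIs~\eqref{eq:lmis-common-LF}.

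Conversely, given a feasible pair $(\bQ \succ 0, \bY)$, I will set $\bK = -\bY\bQ^{-1}$ and $\bP = \bQ^{-1}$. Reversing the substitution turns the LMIs into $\bA_0\bQ + \bQ\bA_0^\top \prec 0$ and $\tilde{\bA}\bQ + \bQ\tilde{\bA}^\top \prec 0$, where Step 1 is used for the second one. Applying the congruence with $\bP$ then shows that $\bx^\top\bP\bx$ is a CQLF for $\bA_0$ and $\tilde{\bA}$.

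\emph{Step 3: GES.} The existence of this CQLF implies in particular that $\bA_0$ is Hurwitz, so this $\bK$ is an admissible nominal gain. It also implies that $\tilde{\bA}$ is Hurwitz. From here, the second half of the proof of Theorem~\ref{thm:global-exponential-stability} applies verbatim, since it only uses a common $\bP$ with $\bP\bA_0 + \bA_0^\top\bP \prec 0$ and $\bP\tilde{\bA} + \tilde{\bA}^\top\bP \prec 0$. Continuity of~\eqref{eq:closed-loop-piecewise-affine} together with~\cite[Theorem 1]{AP-NV-HN:05} then gives exponential convergence. Since the origin is an equilibrium, the origin is GES.
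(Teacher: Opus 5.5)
Your proposal is correct and follows essentially the same route as the paper. Both proofs rest on the identity $\tilde{\bA} = \hat{\bA} - \hat{\bB}\bK$ with $\bK$-independent $\hat{\bA},\hat{\bB}$, then apply the standard change of variables $\bP = \bQ^{-1}$, $\bY = -\bK\bQ$ and the exponential-convergence argument from the proof of Theorem~\ref{thm:global-exponential-stability}. Two of your steps are small gains in rigor rather than a different approach: you impose $\bQ \succ 0$ explicitly, which the paper leaves implicit, and you feed the common $\bP$ directly into~\cite[Theorem 1]{AP-NV-HN:05} instead of going through the hypotheses of Theorem~\ref{thm:global-exponential-stability} as the paper does.
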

\begin{proof}
    Using the change of variables in~\cite[Section 7.2.1]{SB-LEG-EF-VB:94}, $\bQ$, $\bY$ satisfy~\eqref{eq:lmis-common-LF} if and only if $\bK=-\bY\bQ^{-1}$, $\bP=\bQ^{-1}$ simultaneously satisfy the Lyapunov equations for the pairs $(\bA,\bB)$ and $(\hat{\bA},\hat{\bB})$, {\color{black}
    i.e., 
    \begin{align*}
        (\bA - \bB \bK)^\top \bP + \bP (\bA - \bB \bK) \prec 0, \\
        (\hat{\bA} - \hat{\bB} \bK)^\top \bP + \bP (\hat{\bA} - \hat{\bB} \bK) \prec 0.
    \end{align*}}
    {\color{black}
    Since 
    $\hat{\bA}-\hat{\bB}\bK = \bA - \bB \bK + \bv_1^\top \bc^\top \phi(\bA) - \bv_1 \bc^\top \bA^{r-1} \bB \bK = \tilde{\bA}$,
    }
    $\bK = -\bY\bQ^{-1}$ ensures that $\bx^\top \bP \bx$ {\color{black}is a CQLF for $\tilde{\bA}, \bA_0$.
    As shown in the proof of Theorem~\ref{thm:global-exponential-stability}, this CQLF exists (and hence~\eqref{eq:lmis-common-LF} is feasible) if and only if the assumptions of Theorem~\ref{thm:global-exponential-stability} hold.
    Finally, by Theorem~\ref{thm:global-exponential-stability}, such $\bK$ renders the origin GES for~\eqref{eq:closed-loop-piecewise-affine}.}
\end{proof}

\subsection{Unbounded Trajectories}\label{sec:unbounded-trajectories}

Here we study the existence of unbounded trajectories for~\eqref{eq:closed-loop-piecewise-affine}.
Although~\cite{PM-YC-EDA-JC:25-arxiv,JJC-CJT-SS-KS:25} already show examples where such trajectories exist, here we provide a simple condition that guarantees their existence for system~\eqref{eq:closed-loop-piecewise-affine}.



\begin{proposition}\longthmtitle{Unbounded trajectories}\label{prop:unbounded-trajectories}
    Suppose that Assumptions~\ref{as:linear-dynamics-affine-constraints}-\ref{as:stabilizability} hold.
    If $\tilde{\bA}$ has positive real eigenvalues, then~\eqref{eq:closed-loop-piecewise-affine} has unbounded trajectories.
\end{proposition}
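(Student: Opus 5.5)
The plan is to build an explicit trajectory that starts in $\Rc_{-}$, never leaves it, and therefore follows the affine dynamics $\dot{\bx} = \tilde{\bA}\bx + \tilde{\bb}$ for all time while diverging along a real eigenvector of $\tilde{\bA}$.

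\textbf{Setup and transversality.} Let $\mu > 0$ be a real eigenvalue of $\tilde{\bA}$, with a real eigenvector $\bw$, so $\tilde{\bA}\bw = \mu \bw$. First I check that $\bv_2^\top \bw \neq 0$. If $\bv_2^\top \bw = 0$, then $\bA_0 \bw = \tilde{\bA}\bw - \bv_1 \bv_2^\top \bw = \mu \bw$. That would make $\mu>0$ an eigenvalue of the Hurwitz matrix $\bA_0$, which is impossible. This is the same argument as in the proof of Proposition~\ref{prop:existence-of-undesired-equilibrium}. Replacing $\bw$ by $-\bw$ if necessary, I assume $\bv_2^\top \bw < 0$.

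\textbf{Invariant affine line.} Next I look for a point $\bp$ such that the line $\bp + \text{span}\{\bw\}$ is invariant for the affine dynamics. This holds if $\tilde{\bA}\bp + \tilde{\bb} = \kappa \bw$ for some scalar $\kappa$, which requires $\tilde{\bb} \in \text{Im}(\tilde{\bA}) + \text{span}\{\bw\}$.
\begin{itemize}
\item If $\tilde{\bA}$ is invertible, take $\bp = -\tilde{\bA}^{-1}\tilde{\bb}$ and $\kappa = 0$.
\item Along the line, write $\bx(t) = \bp + a(t)\bw$. Then $\dot a = \mu a + \kappa$, so $a(t) = (a_0 + \kappa/\mu)e^{\mu t} - \kappa/\mu$.
\item Also $\eta(\bx(t)) = a(t)\,\bv_2^\top \bw + \bv_2^\top \bp + \alpha d$.
\item Choose $a_0 > 0$ so large that $a_0 + \kappa/\mu > 0$ and $\eta(\bp + a_0\bw) < 0$.
\end{itemize}
Then $a(t)$ is increasing, so $\eta(\bx(t))$ is decreasing, and $\bx(t)$ stays in $\Rc_{-}$ for all $t \geq 0$. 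Hence this curve is the genuine solution of~\eqref{eq:closed-loop-piecewise-affine}; uniqueness holds because the closed-loop vector field is locally Lipschitz. Since $\norm{\bx(t)} \to \infty$ exponentially, the trajectory is unbounded.

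\textbf{Main obstacle: singular $\tilde{\bA}$.} The hard case is $\tilde{\bA}$ singular with $\tilde{\bb} \notin \text{Im}(\tilde{\bA}) + \text{span}\{\bw\}$. Since $\bw = \mu^{-1}\tilde{\bA}\bw \in \text{Im}(\tilde{\bA})$, this just means $\tilde{\bb} \notin \text{Im}(\tilde{\bA})$. By Proposition~\ref{prop:existence-of-undesired-equilibrium}(3) this is exactly the case $\xi = 0$ with no undesired equilibria. My first attempt here is to use a real Jordan decomposition of $\tilde{\bA}$. Write $\real^n = E_\mu \oplus E_0 \oplus E_{\text{rest}}$, where these are the generalized eigenspaces for $\mu$, for $0$, and for the remaining eigenvalues. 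Then:
\begin{itemize}
\item Choose the initial condition with zero component in $E_{\text{rest}}$ after absorbing the part of $\tilde{\bb}$ lying there. This is possible because $\tilde{\bA}$ restricted to $E_{\text{rest}}$ is invertible.
\item The $E_0$-component of the solution is then polynomial in $t$.
\item The $E_\mu$-component can be taken to be $a(t)\bw$ with $a$ growing like $e^{\mu t}$.
\end{itemize}
Because $\bv_2^\top\bw<0$, the exponential term dominates the polynomial one inside $\eta(\bx(t))$. So for $a_0$ large, $\eta(\bx(t))<0$ for all $t\ge 0$. The step needing care is a uniform-in-$t$ bound showing that the polynomial $E_0$ contribution to $\eta$ is dominated by $a_0 e^{\mu t}|\bv_2^\top \bw|/2$, which follows by choosing $a_0$ large relative to the coefficients of that polynomial. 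The rest of the argument is then the same as in the invertible case.
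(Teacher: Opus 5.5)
Your proof is correct. In the invertible case it is the paper's argument. The paper takes $\tilde{\bp}=-\tilde{\bA}^{-1}\tilde{\bb}$ and the half-lines $\tilde{\bp}\pm e^{\lambda t}\bv$ along a real eigenvector. It proves $\bv_2^\top\bv\neq 0$ by the same Hurwitz argument and keeps the half-line that eventually lies in $\Rc_{-}$. Your sign normalization plus a large $a_0$, which makes $\eta$ monotone, replaces its choice of a large $T$. The paper also uses that $\bc^\top\phi_{r-1}(\bA)$ is a left eigenvector for $-\alpha_r$ and that $\tilde{\bp}\in\partial\Cc_{r-1}$. This only shows that the divergent trajectory lies on $\partial\Cc_{r-1}$, and you do not need it for unboundedness.

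Where you go beyond the paper is the singular case. The paper's proof presupposes that $\tilde{\bp}$ exists, i.e.\ that $\tilde{\bA}$ is invertible (equivalently $\xi\neq 0$), although the proposition does not assume this. Your generalized-eigenspace argument closes that gap, and the extra work is genuinely needed. Write $\tilde{\bA}=(\bI+\bv_1\mathbf{q}^\top)\bA_0$ with $\mathbf{q}^\top=\bv_2^\top\bA_0^{-1}$. Singularity forces $\mathbf{q}^\top\bv_1=-1$, so $\text{Im}(\tilde{\bA})=\ker(\mathbf{q}^\top)$ does not contain $\bv_1$. Hence $\tilde{\bb}=\alpha d\,\bv_1\notin\text{Im}(\tilde{\bA})$ whenever $\tilde{\bA}$ is singular, and no invariant line through an equilibrium exists there.

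Two small points. First, in the singular case the $E_\mu$-component must be $\bp_\mu+a(t)\mathbf{w}$ with $\bp_\mu=-(\tilde{\bA}|_{E_\mu})^{-1}\tilde{\bb}_\mu$, because the $E_\mu$-part $\tilde{\bb}_\mu$ of $\tilde{\bb}$ need not vanish; this only shifts $\eta$ by a constant. Second, the decomposition argument with $E_0=\{\mathbf{0}_n\}$ reproduces the invertible case, so you can present everything as a single unified proof.
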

\begin{proof}
    Let $\bv\in\real^n$ be an eigenvector of $\tilde{\bA}$ with positive real eigenvalue $\lambda>0$.
    %
    %
    %
    Since $\bc^\top \phi_{r-1}(\bA)$ is a left eigenvector of $\tilde{\bA}$ (cf. Lemma~\ref{lem:conditions-tildeA-invertible}) 
    and $\lambda\neq-\alpha_r$ (because $\alpha_r > 0$), by~\cite[{\color{black}Theorem} 1.4.7]{RAH-CRJ:12}, $\bc^\top \phi_{r-1}(\bA)\bv = 0$.
    Next, consider the
    curves $\bx_{+}(t) = \tilde{\bp} + e^{\lambda t} \bv$, $\bx_{-}(t) = \tilde{\bp} - e^{\lambda t} \bv$.
    Since $\tilde{\bp}\in\partial\Cc_{r-1}$ 
    (cf.~\cite[Lemma 4.1]{PM-YC-EDA-JC:25-arxiv})
    and $\bc^\top \phi_{r-1}(\bA) \bv = 0$, both curves are contained in $\partial\Cc_{r-1}$ for all $t\in\real$.
    Let us further show that such curves are not contained in a hyperplane parallel to $\eta(\bx) = 0$. 
    Indeed, if that was the case, $\bv$ would have to be perpendicular to $\bv_2$ (the normal vector defining the hyperplane $\eta(\bx) = 0$).
    Hence, $\bv_2^\top \bv = 0$.
    By using this property, we get $\lambda \bv = \tilde{\bA}\bv = \bA_0\bv$,
    which implies that $\bA_0$ has an eigenvector with positive real eigenvalue, contradicting the assumption that $\bA_0$ is Hurwitz.
    Therefore, since $\{ \bx_{+}(t) \}_{t\geq0}$ and $\{ \bx_{-}(t) \}_{t\geq0}$ are non-intersecting half-lines and $\eta(\bx) = 0$ is a hyperplane,
    this implies that there exists $T > 0$ sufficiently large such that for $t \geq T$, either $\bx_{+}(t)\in\Rc_{-}$ or $\bx_{-}(t)\in\Rc_{-}$ holds.
    The corresponding curve is a trajectory of~\eqref{eq:closed-loop-piecewise-affine} and is unbounded.
\end{proof}


{\color{black}Proposition}~\ref{prop:unbounded-trajectories} provides a simple test on $\tilde{\bA}$ that ensures that~\eqref{eq:closed-loop-piecewise-affine} has unbounded trajectories.
In fact, its proof shows that such trajectory diverges while staying on the hyperplane $\partial\Cc_{r-1}$.
%
Note also that the case where $\tilde{\bA}$ is unstable but the unstable modes correspond to complex conjugate eigenvalues with positive real part is not covered by {\color{black}Proposition}~\ref{prop:unbounded-trajectories}.
As we show in Section~\ref{sec:simulations}, this case can lead to unbounded trajectories or the origin being GES.
%
%

Interestingly, dynamical properties can be linked to invariant zeros of an appropriate
SISO system.

\begin{corollary}\longthmtitle{Invariant zeros}\label{prop:global-exponential-stab-minimum-phase-siso}
    If all invariant zeros of the SISO system $\mathcal{S} = (\bA_0, \bB \bG^{-1} \bB^\top (\bA^\top)^{r-1} \bc, \bc^\top)$ have negative real part {\color{black}and $\bA_0 \tilde{\bA}$ has no negative real eigenvalues}, the origin is GES for~\eqref{eq:closed-loop-piecewise-affine}.
    If $\mathcal{S}$ has positive real invariant zeros, then~\eqref{eq:closed-loop-piecewise-affine} has unbounded trajectories.
\end{corollary}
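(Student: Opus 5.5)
The plan is to reduce the corollary to Theorem~\ref{thm:global-exponential-stability} and Proposition~\ref{prop:unbounded-trajectories}. The bridge is to identify the eigenvalues of $\tilde{\bA}$ in terms of the invariant zeros of $\mathcal{S}$, using Lemma~\ref{lem:conditions-tildeA-invertible}. Write $\bb_s := \bB\bG^{-1}\bB^\top(\bA^\top)^{r-1}\bc$. Then the transfer function of $\mathcal{S}$ is exactly $H(\lambda) = \bc^\top(\lambda\bI-\bA_0)^{-1}\bb_s$, the function already used in the proof of Proposition~\ref{prop:existence-undes-eq-alternative-condition}.

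\textbf{Step 1: spectrum of $\tilde{\bA}$ versus invariant zeros.} Using the Schur complement of the Rosenbrock matrix, for $z\notin\text{spec}(\bA_0)$ we have
\begin{align*}
\det\begin{pmatrix}\bA_0-z\bI & \bb_s\\ \bc^\top & 0\end{pmatrix} = \det(\bA_0-z\bI)\,H(z)
\end{align*}
up to a sign. Hence $z$ is an invariant zero if and only if $H(z)=0$. Combining this with the matrix determinant lemma as in Lemma~\ref{lem:conditions-tildeA-invertible}, I would show that
\begin{align*}
\det(\lambda\bI-\tilde{\bA}) = \kappa\,\phi(\lambda)\,\det(\lambda\bI-\bA_0)\,H(\lambda)
\end{align*}
for a nonzero constant $\kappa$. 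This identity holds as polynomials, since the right-hand side is $\kappa\,\phi(\lambda)\,\bc^\top\text{adj}(\lambda\bI-\bA_0)\bb_s$. By the leading-order computation in Proposition~\ref{prop:existence-undes-eq-alternative-condition}, this polynomial has degree $n$ with leading coefficient $\kappa\theta^2$, which fixes $\kappa=\theta^{-2}$. Now $\bc^\top\text{adj}(\lambda\bI-\bA_0)\bb_s$ equals, up to sign, the determinant of the Rosenbrock matrix of $\mathcal{S}$ evaluated at $\lambda$. Its roots are therefore precisely the invariant zeros of $\mathcal{S}$, including any that coincide with eigenvalues of $\bA_0$. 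We conclude that $\text{spec}(\tilde{\bA}) = \{-\alpha_i\}_{i=1}^r \cup \{\text{invariant zeros of } \mathcal{S}\}$, counted with multiplicity.

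\textbf{Step 2: GES.} If all invariant zeros have negative real part, then by Step~1 every eigenvalue of $\tilde{\bA}$ lies in $\{-\alpha_i\}$, which are negative since $\alpha_i>0$, or among those zeros. Hence $\tilde{\bA}$ is Hurwitz. Together with the hypothesis that $\bA_0\tilde{\bA}$ has no negative real eigenvalues, Theorem~\ref{thm:global-exponential-stability} yields GES.

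\textbf{Step 3: unbounded trajectories.} If $\mathcal{S}$ has a positive real invariant zero $z$, then Step~1 shows $z\in\text{spec}(\tilde{\bA})$. Proposition~\ref{prop:unbounded-trajectories} then gives unbounded trajectories.

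\textbf{Main obstacle.} The delicate point is Step~1 when zeros of $\mathcal{S}$ coincide with eigenvalues of $\bA_0$. Lemma~\ref{lem:conditions-tildeA-invertible} only addresses eigenvalues of $\tilde{\bA}$ outside $\text{spec}(\bA_0)$, so it cannot settle this case on its own. For this reason I would state the identity at the level of polynomials, via the adjugate, rather than at the level of rational functions, so that cancellations are handled automatically.

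To verify the polynomial identity I would expand $\det(\lambda\bI-\bA_0-\bv_1\bv_2^\top) = \det(\lambda\bI-\bA_0) - \bv_2^\top\text{adj}(\lambda\bI-\bA_0)\bv_1$. I would then repeat the degree-reduction manipulation \eqref{eq:reduce-degree-identity} with $\text{adj}$ in place of the inverse; the identity extends by continuity from $\lambda\notin\text{spec}(\bA_0)$. This gives $-\bv_2^\top\text{adj}(\lambda\bI-\bA_0)\bv_1 = \det(\lambda\bI-\bA_0)\cdot(\cdots)$ plus $\phi(\lambda)\,\bc^\top\text{adj}(\lambda\bI-\bA_0)\bb_s/\theta$, and the $\det(\lambda\bI-\bA_0)$ terms should cancel exactly, as in the lemma.
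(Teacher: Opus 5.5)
Your proposal is correct and follows the same route as the paper. Like the paper, you relate $\text{spec}(\tilde{\bA})$ to $\{-\alpha_i\}_{i=1}^r$ and the invariant zeros of $\mathcal{S}$ via the matrix determinant lemma and~\eqref{eq:reduce-degree-identity}, then invoke Theorem~\ref{thm:global-exponential-stability} and Proposition~\ref{prop:unbounded-trajectories}. Your adjugate-level factorization $\det(\lambda\bI-\tilde{\bA})=\kappa\,\phi(\lambda)\,\bc^\top\text{adj}(\lambda\bI-\bA_0)\bB\bG^{-1}\bB^\top(\bA^\top)^{r-1}\bc$ is just a sharper form of the paper's rational-function statement.

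Two minor remarks. First, carrying out the computation gives $\kappa=\theta^{-1}$, not $\theta^{-2}$, because the leading coefficient of $H$ is $\bc^\top\bA^{r-1}\bB\bG^{-1}\bB^\top(\bA^\top)^{r-1}\bc=\theta$; only $\kappa\neq0$ matters, so this does not affect the argument. Second, the pole--zero coincidence you flag as the main obstacle cannot affect this corollary: eigenvalues of $\tilde{\bA}$ lying in $\text{spec}(\bA_0)$ are automatically in the open left half-plane, and a positive real zero can never lie in $\text{spec}(\bA_0)$.
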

\begin{proof}
    Follows from {\color{black}Theorem}~\ref{thm:global-exponential-stability} and {\color{black}Proposition}~\ref{prop:unbounded-trajectories}
    by observing that invariant zeros of $\mathcal{S}$ are exactly solutions of~\eqref{eq:eigenvalues-of-tildeA}, which as shown in Lemma~\ref{lem:conditions-tildeA-invertible}, are exactly the eigenvalues of $\tilde{\bA}$ that are not $\{ -\alpha_i \}_{i=1}^r$ or eigenvalues of $\bA_0$.
\end{proof}

\begin{remark}
\label{cor:dynamical-properties-single-input-systems}
    If $m=1$, $\bB^\top (\bA^\top)^{r-1} \bc, \bG^{-1} \in \real$.
    Since $\tilde{\bA} = \bA - \frac{\bB \bc^\top \phi(\bA) }{ \bB^\top (\bA^\top)^{r-1} \bc }$
    and $\text{spec}(\tilde{\bA})\backslash \{ -\alpha_i \}_{i=1}^r$ is
    independent of $\{ \alpha_i \}_{i=1}^r$ (cf.
    Lemma~\ref{lem:conditions-tildeA-invertible}),
    the guarantees in {\color{black}Propositions~\ref{prop:existence-undes-eq-alternative-condition}} and~\ref{prop:unbounded-trajectories} are independent of $\bK, \{ \alpha_i \}_{i=1}^r$, and $\bG$.
    %
    %
    \demo
\end{remark}


\begin{figure}
    \centering
    \includegraphics[width=0.8\linewidth]{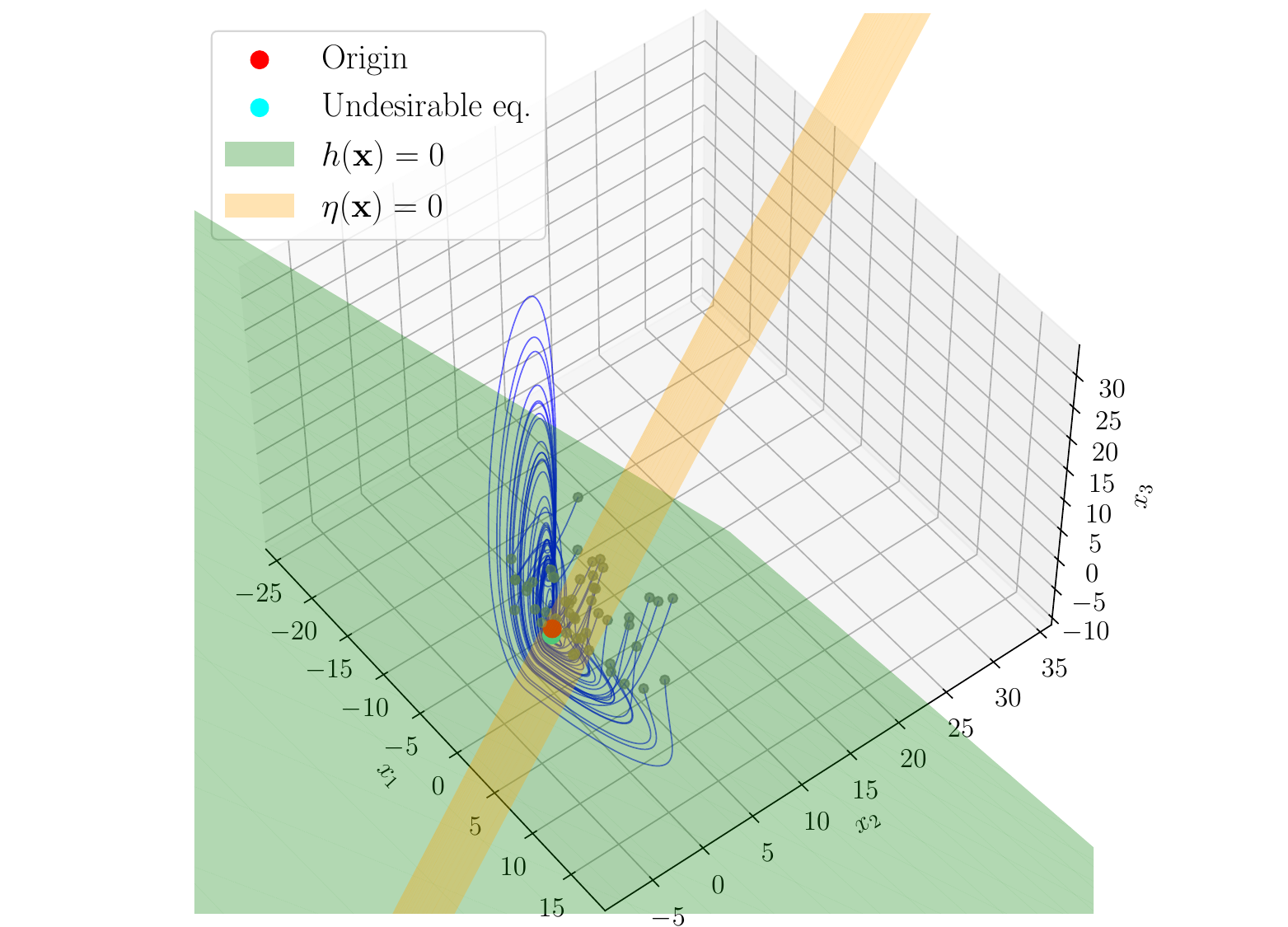}
    \includegraphics[width=0.8\linewidth]{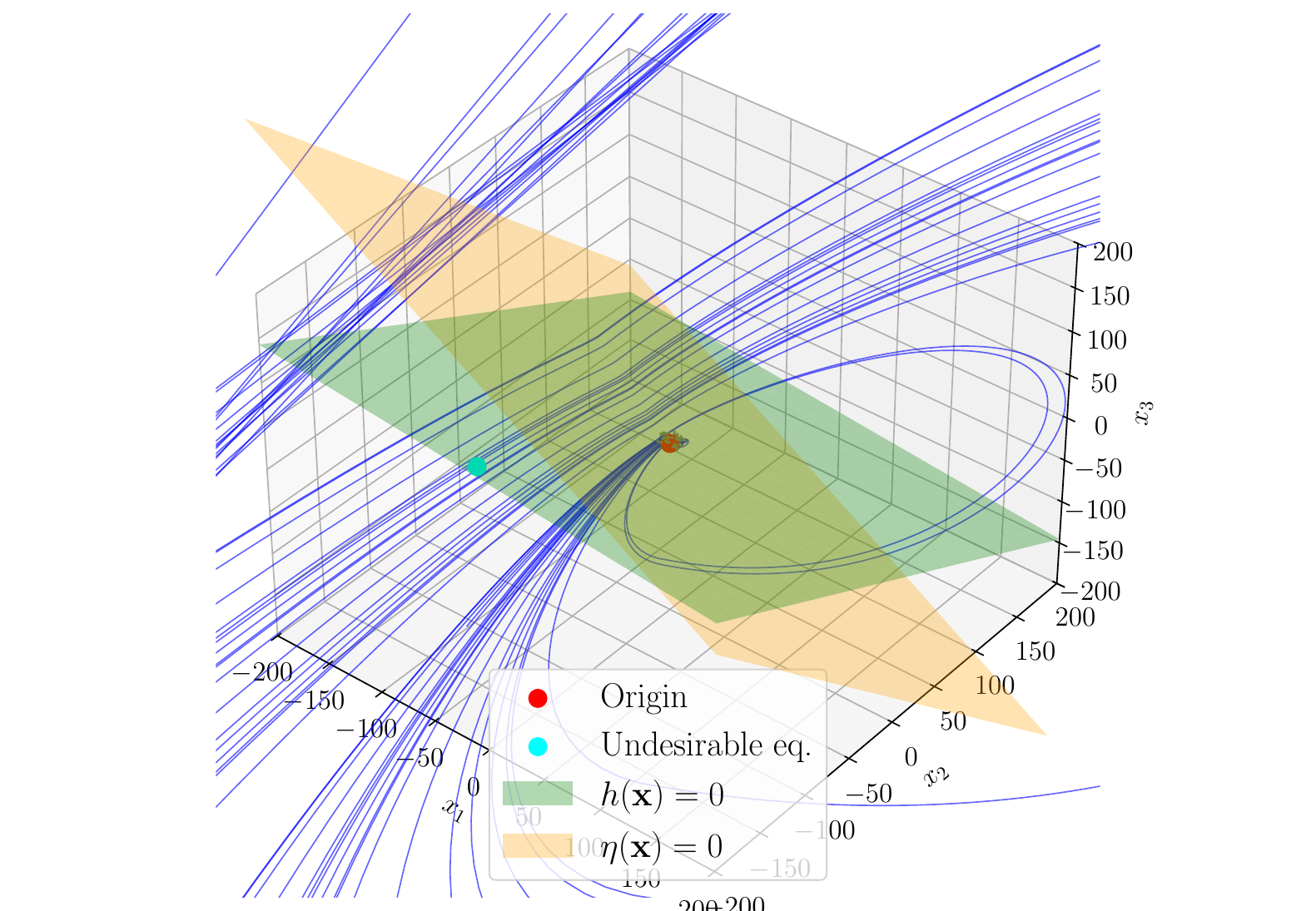}

    \caption{
    \footnotesize
    Trajectories for two systems with $n=3$, $m=1$, and $\tilde{\bA}$ having a pair of complex conjugate eigenvalues with positive real part. (Top) The origin is GES. $\bA = [ 0.65, 1.18, 0.05; 0.38, 0.93, -0.7; 1.52, 1.12, 0.22]$, $\bB = [-1.24; 1.93; -0.63]$, $\bK = [4.57, 6.23, -0.01]$, $\bc = [-0.1; 1.32; 0.67]$, $d = 0.71$. (Bottom) Unbounded trajectories. $\bA = [ 0.45, -1.47, 1.48; 0.47, -0.12, -0.57; 0.99, -0.11, -2.5]$, $\bB = [0.06; -0.31, 0.19]$, $\bK = [3.09, -4.2, 2.12]$, $\bc = [0.31; 1.32; 2.26]$, $d = 2.5$. Both examples use $\alpha = 5$, $\bG = 1$ and have $r=1$.
    }
    \label{fig:two-behaviors}
\end{figure}

\section{Simulations}\label{sec:simulations}

In this section we showcase the results of the paper in different examples.

First, we note that the two bottom figures in Figure~\ref{fig:hero} showcase the two possible dynamical behaviors studied in the paper for two different planar systems.
Both examples use $\alpha = 5$, $\bG = 1$ and have $r=1$.
The bottom right corresponds to an example where the origin is GES, with $\bA = -[ 0.79, 1.6; 0.43, 0.01]$, $\bB = [0.61; 0.55]$, $\bK = [0.33, 0.88]$, $\bc = -[0.26, 0.86]$, $d = 0.49$, and {\color{black}the conditions in Theorem~\ref{thm:global-exponential-stability} are satisfied}.
The bottom left corresponds to an example with unbounded trajectories, with $\bA = [ 1.43, -0.97; 1.26, -0.04]$, $\bB = [0.23; -1.04]$, $\bK = [5.3, 0.9]$, $\bc = [-0.64, 0.9]$, $d = 0.36$, and the associated $\tilde{\bA}$ having a positive real eigenvalue.

{\color{black}
There are various examples not covered by the results in the paper. For example, as mentioned in Section~\ref{sec:stability-origin} the origin is not GES for the example in~\cite[Section III.B]{NM-JC-PS-KZ:25} for which $\tilde{\bA}$ is Hurwitz but $\tilde{\bA} \bA_0$ has negative real eigenvalues (and hence the assumptions of Theorem~\ref{thm:global-exponential-stability} are violated).}
Similarly, Figure~\ref{fig:two-behaviors} illustrates two systems for which $\tilde{\bA}$ has a pair of complex conjugate eigenvalues with positive real part,
which can lead to the origin being GES (top) or the trajectories being unbounded (bottom).

Next, we test the results in the paper for the safe control of the roll-yaw dynamics of a mid-size aircraft around an operating point defined by velocity $717.17$ ft/sec, altitude 25000 ft, and angle of attack $4.5627^\circ$~\cite[Section 14.8]{EL-KAW:24}.
{\color{black}
Although in this example the safety filter is applied to an affine (instead of linear) system, it can be shown that the results in the paper can easily be extended to this setting.}
The state of the system is $\bx_p = [\beta, p_s, r_s]^\top$, where $\beta$ is the sideslip angle (in rad), and $p_s, r_s$ are roll and yaw rates (in rad/s). The inputs are aileron and rudder deflections $\delta_a$ and $\delta_r$ (rad). The plant dynamics are $\dot{\bx}_p = \bA_p \bx_p + \bB_p\bu$,
with $\bA_p$, $\bB_p$ as in~\cite[Section 5.2]{PM-SSM-PO-LY-ED-JWB-ADA:25}.
Our goal is to regulate $p_s$ to a desired commanded signal $y_{\text{cmd}}:\real_{\geq0}\to\real$ (cf. Figure~\ref{fig:sim-plot} (top)). Note that $p_s = \bC_p \bx_p$, with $\bC_p = [0, 1, 0]$ and $r=1$. 
To do so, we use a 
method similar to that in~\cite[Section 4.4.1]{EL-KAW:24} and design a
nominal LQR PI controller using the the integrated output tracking error dynamics, with 
{\color{black}$\bQ = \text{diag}(11.67, 35.75, 10.89, 0.09)$
and $\bR = \text{diag}(1, 0.02)$}.
The dynamics of the error variable are $\dot{e}_{yI} = \bC_p \bx_p - y_{\text{cmd}} - \kappa e_{yI}$,
where $\kappa = 0.01$ is a regularization factor.
The system in the $[e_{yI}, \bx_p]$ variables is:
{\small
\begin{align}\label{eq:extended-dynamics}
    \begin{bmatrix}
        \dot{e}_{yI} \\
        \dot{\bx}_p
    \end{bmatrix} = 
    \underbrace{
    \begin{bmatrix}
        -\kappa & \bC_{p} \\
        \mathbf{0}_{3} & \bA_p
    \end{bmatrix} 
    }_{\bA}
    \begin{bmatrix}
        e_{yI} \\
        \bx_p
    \end{bmatrix}
    + 
    \underbrace{
    \begin{bmatrix}
        \mathbf{0}_2^\top
        \\
        \bB_p
    \end{bmatrix} 
    }_{\bB}
    \bu
    + 
    \begin{bmatrix}
        -p_{\text{cmd}} \\
        \mathbf{0}_3
    \end{bmatrix}.
\end{align}
}

Next, we set an upper bound on the roll rate limit equal to $0.4$, leading to a CBF with $\bc = [0, 0, -1, 0]$ and $d = 0.4$.
Although~\eqref{eq:extended-dynamics} is affine (instead of linear) in the state variables 
the results derived throughout the paper carry over analogously, with the difference that closed-loop equilibrium is not the origin and is affected by the affine term in~\eqref{eq:extended-dynamics}.
In order to guarantee that such equilibrium is GES, we design $\bK$ to satisfy the conditions in Theorem~\ref{thm:global-exponential-stability} (with $\bA$ and $\bB$ as in~\eqref{eq:extended-dynamics}, $\alpha=1$, and $\bG=\bI_2$).
In this case, we use the associated LQR gain, which satisfies this. 
Figure~\ref{fig:sim-plot} showcases the evolution of the roll rate $p_s$ under the nominal controller and the safety filter.

\begin{figure}
    \centering
    \includegraphics[width=0.83\linewidth]{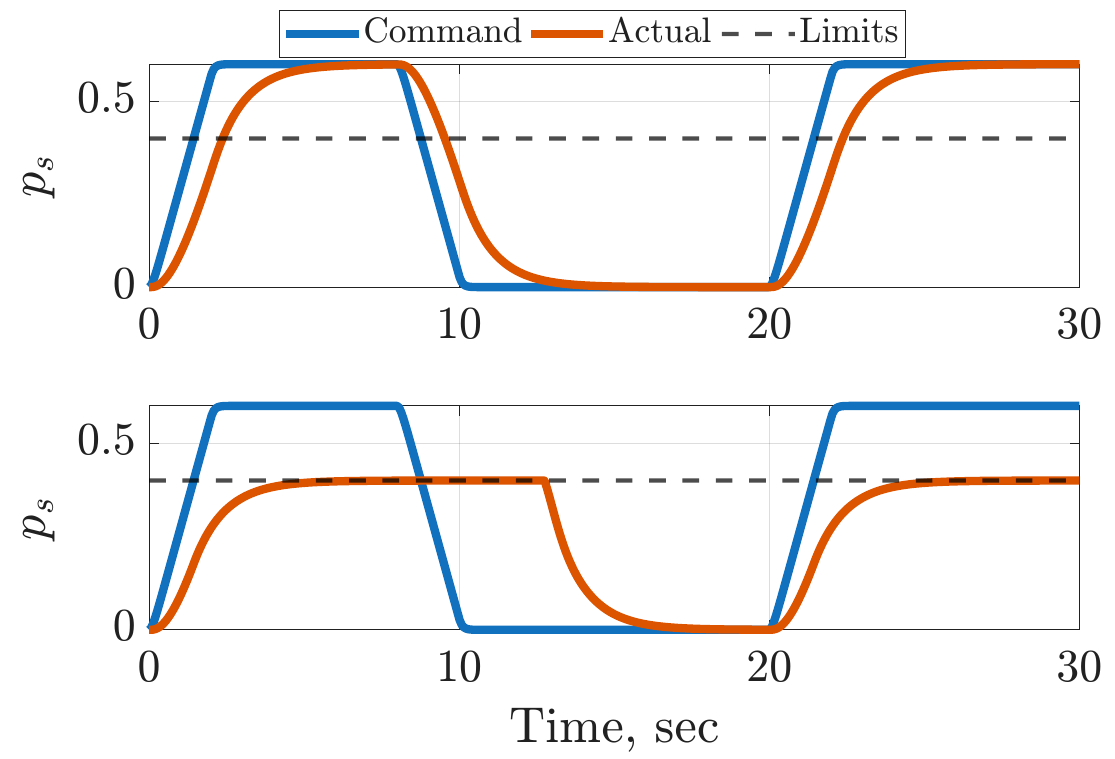}
    
    \caption{
    \footnotesize
    Evolution of $p_s$ for the filtered (bottom) and nominal (top) systems.
    }
    \label{fig:sim-plot}
\end{figure}

{\color{black}
\section{Acknowledgements}

The authors thank Prof. Joaquin Carrasco for bringing to their attention the example in~\cite[Section III.B]{NM-JC-PS-KZ:25}.}

\section{Conclusions}

We have studied the dynamical properties of CBF-based \textit{safety filters} for linear systems and affine constraints.
We have derived conditions under which the closed-loop system has undesired equilibria, unbounded trajectories, or the origin is GES.
Our results provide simple design conditions on the safety filter parameters to ensure that it induces desirable dynamical properties. Future work will
study the dynamical behavior in the cases where the assumptions of Theorem~\ref{thm:global-exponential-stability} and Proposition~\ref{prop:unbounded-trajectories} are violated,
and extend the results to multiple affine constraints and nonlinear systems.

\bibliography{bib/alias,bib/Main-add,bib/Main,bib/JC,bib/New,bib/PM}
\bibliographystyle{IEEEtran}

\end{document}